\newcommand{\upd}[1]{\textcolor{black}{#1}}
\newcommand{\updr}[1]{\textcolor{black}{#1}}
\newcommand{\abs}[1]{\left\lvert#1\right\rvert}
\newcommand{\rd}[1]{\left\lfloor#1\right\rceil}
\DeclareMathOperator{\Res}{Res}
\newcommand\TS{\rule{0pt}{3ex}} 
\newcommand\BS{\rule[-1.2ex]{0pt}{0pt}} 
\newtheorem{thm}{Theorem}
\newtheorem{prop}{Proposition}
\begin{document}

\title{The distribution of $k$-free numbers}

\author[M.~J. Mossinghoff]{Michael J. Mossinghoff}\thanks{This work was supported in part by a grant from the Simons Foundation (\#426694 to M.~J. Mossinghoff).}
\address{Center for Communications Research\\
Princeton, NJ, USA}
\email{m.mossinghoff@idaccr.org}

\author[T. Oliveira e Silva]{Tom\'{a}s Oliveira e Silva}
\address{Departamento de Electr{\'o}nica, Telecomunica{\c c}{\~o}es e Inform{\'a}tica / IEETA,
  Universidade de Aveiro, Portugal}
\email{tos@ua.pt}

\author[T.~S. Trudgian]{Timothy S. Trudgian}\thanks{Supported by Australian Research Council Future Fellowship FT160100094.}
\address{School of Science, The University of New South Wales Canberra, Australia}
\email{t.trudgian@adfa.edu.au}

\keywords{Square-free, oscillations, Riemann hypothesis}
\subjclass[2010]{Primary: 11M26, 11N60; Secondary: 11Y35}
\date{\today}

\begin{abstract}
\noindent
Let $R_k(x)$ denote the error incurred by approximating the number of $k$-free integers less than $x$ by $x/\zeta(k)$.
It is well known that $R_k(x)=\Omega(x^{\frac{1}{2k}})$, and widely conjectured that $R_k(x)=O(x^{\frac{1}{2k}+\epsilon})$.
By establishing weak linear independence of some subsets of zeros of the Riemann zeta function, we establish an effective proof of the lower bound, with significantly larger bounds on the constant compared to those obtained in prior work.
For example, we show that $R_k(x)/x^{1/2k} > 3$ infinitely often and that $R_k(x)/x^{1/2k} < -3$ infinitely often, for $k=2$, $3$, $4$, and $5$.
We also investigate $R_2(x)$ and $R_3(x)$ in detail and establish that our bounds far exceed the oscillations exhibited by these functions over a long range: for $0<x\leq10^{18}$ we show that $\abs{R_2(x)} < 1.12543x^{1/4}$ and $\abs{R_3(x)} < 1.27417x^{1/6}$.
We also present some empirical results regarding gaps between square-free numbers and between cube-free numbers.
\end{abstract}

\maketitle

\section{Introduction}\label{secIntroduction}

For an integer $k\geq2$, we say an integer $n$ is \textit{$k$-free} if it has no divisor $d>1$ which is a perfect $k$th power.
Let $Q_k(x)$ denote the number of positive $k$-free integers not exceeding $x$.
It is well known that the $k$-free integers have density $1/\zeta(k)$ in $\mathbb{Z}^+$:
\[
\lim_{x\to\infty} \frac{Q_k(x)}{x} = \frac{1}{\zeta(k)}.
\]
Let $R_k(x)$ denote the error in estimating $Q_k(x)$ by using this density,
\[
R_k(x) = Q_k(x) - \frac{x}{\zeta(k)}.
\]
It is well known that $R_k(x) = O(x^{1/k})$ and $R_k(x) = \Omega(x^{1/2k})$; see the excellent surveys \cite{FGT,Papp} for background and results on $k$-free integers.

Only slight improvements in bounds on $R_k(x)$ have been obtained unconditionally.
The best known upper bound, due to Walfisz in 1963 \cite{Wall}, employed the sharpest known zero-free region of the Riemann zeta function to show that
\[
R_k(x) = O\left(x^{1/k} \exp\left\{-c k^{-8/5} (\log x)^{3/5} (\log\log x)^{-1/5}\right\}\right),
\]
for an absolute positive constant $c$.
Significantly better bounds are known under the assumption of the Riemann hypothesis.
In 1981, Montgomery and Vaughan \cite{Monty} proved with this that for any $\epsilon>0$ one has
\[
R_k(x) = O\left(x^{1/(k+1)+\epsilon}\right).
\]
This bound has seen improvements for each $k$, again under the assumption of the Riemann hypothesis.
For $k=2$, so concerning the distribution of square-free integers, the best result\footnote{We note that Cohen et al.\ \cite[p.\ 54]{Cohen} claim that the Riemann hypothesis implies $R_2(x) = O(x^{1/4} \log x)$, but this appears to be in error. Indeed, in their Lemma~1, the best choice of $n$ appears to give nothing better than $R_2(x) = O(x^{1/3 + \epsilon})$, which is the result of Montgomery and Vaughan \cite{Monty}.} is due to Liu \cite{Liu}, who proved in 2016 that $R_2(x) = O(x^{11/35 +\epsilon})$.
\upd{For $k=3$ and $4$, in 2010 Baker and Powell \cite{BakerPowell} established that $R_k(x) = O(x^{\theta_k+\epsilon})$, with $\theta_3=17/74$ and $\theta_4=17/94$, and in 2014 Liu \cite{hungry} derived improved results for larger $k$: $\theta_5=23/154$, $\theta_6=4/31$, and $\theta_k=1/(k+\frac{18}{11})$ for $k\geq7$.
In addition, improved values for $7\leq k\leq 20$ were given by Graham and Pintz \cite{GrahamPintz}.}

In the other direction, Evelyn and Linfoot \cite{EL} in 1931 established the unconditional lower bound that $R_k(x) = \Omega(x^{1/2k})$, though without an explicit constant.
Effective lower bounds of this shape were first established by Stark \cite{Stark} in 1966, in the course of studying the Schnirelmann density of the $k$-free integers (i.e., $\inf_{n\geq1} Q_k(n)/n$) and showing that for each $k$ this quantity is strictly smaller than the asymptotic density $1/\zeta(k)$.
Stark proved that
\begin{equation}\label{eqnStark}
\liminf_{x\to\infty} \frac{R_k(x)}{x^{1/2k}} \leq -B_k \quad\mathrm{and}\quad
\limsup_{x\to\infty} \frac{R_k(x)}{x^{1/2k}} \geq B_k,
\end{equation}
where
\[
B_k = 2\left(1-\frac{\gamma_1}{\gamma_2}\right)\abs{\frac{\zeta(\rho_1/k)}{\rho_1\zeta'(\rho_1)}}
\]
and $\rho_j=\frac{1}{2} + i\gamma_j$ for $j=1$, $2$ denote the first two zeros of the Riemann zeta function on the critical line in the upper half plane.
This produces the bounds $B_2=0.0657$, $B_3=0.0388$, $B_4=0.0297$, and $B_5=0.0261$ (truncating after four decimal places).
Later, Balasubramanian and Ramachandra \cite{BR1} (see also \cite{BR2}) employed different analytic methods to obtain some explicit bounds, showing that
\[
\frac{R_2(x)}{x^{1/4}} > 10^{-1000}
\]
infinitely often, and similarly that this quantity is less than $-10^{-1000}$ infinitely often.
Also, in 1983 Pintz \cite{Pintz} investigated the mean value of $\abs{R_k(x)}$ over the interval $[1,y]$, and showed that this exceeds  $c(k) y^{1/2k}$  when $y$ is sufficiently large, where $c(k)$ is an effectively computable constant depending on $k$.

If in fact $R_{k}(x) = O(x^{1/2k})$, then it is straightforward to show that the Riemann hypothesis follows, as well as the simplicity of the zeros of the zeta function on the critical line.
Using a result of Landau, these results follow in fact from the weaker result that $R_k(x)/x^{1/2k}$ is bounded by a constant either from above or from below (see \cite{Ingham1942,MossT1}).
However, following Ingham \cite{Ingham1942}, more is true, even under this weaker hypothesis.
If $R_k(x)/x^{1/2k}$ is bounded in either direction, then it would follow that there exist infinitely many integer relations among the ordinates of the zeros of the Riemann zeta function on the critical line in the upper half plane.
Since there seems no particular \textit{a priori} reason why such linear relations should exist (see, e.g., \cite{Best}), one may suspect that $R_k(x)/x^{1/2k}$ exhibits unbounded oscillations.

In this article, we establish that $R_k(x)/x^{1/2k}$  exhibits considerably larger oscillations than those established in prior work.
Our method produces a nontrivial value in place of the constant $B_k$ in~\eqref{eqnStark} for any $k$, and for $k\leq5$, our bound is particularly simple.
We record these values in the following statement, along with an asymptotic result.
Information on bounds for additional values of $k$ may be found in Section~\ref{secOscillations} (see Figure~\ref{figOscBound100}).

\begin{thm}\label{thmOscillate}
For $2\leq k\leq5$, we have
\begin{equation}\label{board}
\liminf_{x\to\infty} \frac{R_k(x)}{x^{1/2k}} < -3 \quad\mathrm{and}\quad
\limsup_{x\to\infty} \frac{R_k(x)}{x^{1/2k}} > 3.
\end{equation}
In addition,
\[
\liminf_{x\to\infty} \frac{R_k(x)}{x^{1/2k}} < -0.74969 \quad\mathrm{and}\quad
\limsup_{x\to\infty} \frac{R_k(x)}{x^{1/2k}} > 0.74969
\]
for sufficiently large $k$.
\end{thm}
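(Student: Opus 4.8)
The plan is to reduce the two-sided oscillation of $R_k(x)/x^{1/2k}$ to the size of a finite trigonometric sum over low-lying zeros of $\zeta$ and then to make that sum large; the key new input is that only a weak form of linear independence of the ordinates is required, and this is certifiable by a finite computation.

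\textbf{Reduction to RH and a smoothed explicit formula.} If $R_k(x)/x^{1/2k}$ is bounded above (resp.\ below), then by the theorem of Landau recalled in Section~\ref{secIntroduction} the Riemann hypothesis holds and every zero of $\zeta$ is simple; otherwise the corresponding half of \eqref{board} is immediate, since the limit superior (resp.\ inferior) is $\pm\infty$. So we may assume RH and simplicity. Put $f(u)=R_k(e^u)e^{-u/2k}$, so that $\limsup_{x\to\infty}R_k(x)/x^{1/2k}=\limsup_{u\to\infty}f(u)$, and convolve $f$ with a fixed even, nonnegative Schwartz weight $g$ whose Fourier transform $\widehat g$ is also nonnegative (a Gaussian works); the convolution converges since $R_k(x)=O(x^{1/k})$ and $g$ decays rapidly. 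Because the Mellin transform of $R_k$ is $\zeta(s)/(s\zeta(ks))-1/(\zeta(k)(s-1))$, shifting the contour just past the line $\mathrm{Re}\,s=1/(2k)$ picks up exactly the poles at $s=\rho/k$, $\rho=\tfrac12+i\gamma$, and yields
\[
\frac{(f*g)(v)}{\widehat g(0)}=\sum_{\gamma>0}2\,\mathrm{Re}\!\left(c_\gamma W_\gamma e^{i\gamma v/k}\right)+O\!\left(e^{-\eta v}\right),\qquad c_\gamma=\frac{\zeta(\rho/k)}{\rho\,\zeta'(\rho)},\quad W_\gamma=\frac{\widehat g(\gamma/k)}{\widehat g(0)}\in(0,1],
\]
for some $\eta>0$, the series converging absolutely by the rapid decay of $\widehat g$. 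Using $g\ge0$ and the rapid decay of $g$ one gets $\limsup_{u\to\infty}f(u)\ge\limsup_{v\to\infty}(f*g)(v)/\widehat g(0)$ (trivial if the left side is infinite), and the right-hand side is the supremum of the uniformly almost periodic series above. Truncating at a height $T$, the tail $\gamma>T$ contributes at most $\tau(T)=\sum_{\gamma>T}2\abs{c_\gamma}W_\gamma$, which is explicit and small for $T$ large (bound $\abs{c_\gamma}$ by Cauchy--Schwarz against the classical mean-square estimate for $R_k$ and exploit the rapid decay of $W_\gamma$). The lower bound for $\liminf$ is handled identically with infima replacing suprema.

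\textbf{Weak linear independence and the Kronecker step.} It remains to bound from below
\[
\sup_{v\in\mathbb{R}}\ \sum_{0<\gamma\le T}2\abs{c_\gamma}W_\gamma\cos\!\left(\tfrac{\gamma v}{k}+\arg c_\gamma\right),
\]
which equals the maximum of the corresponding function over the closure $H$ of the linear flow $v\mapsto(\gamma v/k)_{0<\gamma\le T}$ on a torus; $H$ is the subtorus cut out by the $\mathbb{Z}$-linear relations among the finitely many ordinates $\gamma\le T$. We verify, to adequate precision and for an explicit bound $M$, that these ordinates satisfy no relation $\sum m_j\gamma_j=0$ with $0<\sum\abs{m_j}\le M$; this is a finite lattice computation for which an $\mathrm{LLL}$ reduction produces a certified lower bound on the length of the shortest relation. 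A short lemma then shows that such ``$M$-weak linear independence'' forces $H$ to be $\varepsilon_M$-dense, with $\varepsilon_M\to0$ as $M\to\infty$: given any target, distribute the necessary phase correction over the support of a shortest relation, so that each coordinate moves by at most $\pi/M$ times a factor depending only on the number of ordinates involved. Taking the target so that $\gamma v/k\equiv-\arg c_\gamma\pmod{2\pi}$ for all $\gamma\le T$ up to $\varepsilon_M$, we obtain arbitrarily large $v$ with every cosine at least $\cos\varepsilon_M$, hence
\[
\limsup_{x\to\infty}\frac{R_k(x)}{x^{1/2k}}\ \ge\ (\cos\varepsilon_M)\sum_{0<\gamma\le T}2\abs{c_\gamma}W_\gamma\ -\ \tau(T),
\]
and symmetrically $\liminf_{x\to\infty}R_k(x)/x^{1/2k}\le-(\cos\varepsilon_M)\sum_{0<\gamma\le T}2\abs{c_\gamma}W_\gamma+\tau(T)$.

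\textbf{Conclusion and the asymptotic bound.} For $2\le k\le5$ one chooses the width of $g$ and the height $T$ so that $\sum_{0<\gamma\le T}2\abs{c_\gamma}W_\gamma$ comfortably exceeds $3$ — a finite computation from the known low-lying zeros together with the values of $\zeta$ at $\rho/k$ and of $\zeta'$ at $\rho$ — while $M$ is large enough that $\cos\varepsilon_M$ is near $1$ and $\tau(T)$ is negligible; this establishes \eqref{board}. For the asymptotic statement, note that $\rho/k\to0$ and $\zeta(0)=-\tfrac12$, so $2\abs{c_\gamma}\to\abs{\rho\zeta'(\rho)}^{-1}$ uniformly over any fixed finite set of zeros while the error terms remain negligible uniformly in $k$; hence a single certified set of zeros yields a lower bound tending to $\sum\abs{\rho\zeta'(\rho)}^{-1}$ over that set, which exceeds $0.74969$, and the claimed bounds hold for all sufficiently large $k$. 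The conceptual heart of the argument is the lemma converting ``no short $\mathbb{Z}$-relation among the ordinates'' into an effective density statement for $H$ (together with the routine but necessary task of making every error term in the smoothed explicit formula fully explicit under RH), and the computational heart is certifying $M$-weak linear independence, to sufficient precision, for the several hundred ordinates needed to push the trigonometric sum past $3$.
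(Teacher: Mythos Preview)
Your overall strategy---certify, via LLL, that no short integer relation holds among a finite set of ordinates, and convert this into a lower bound on an almost-periodic sum over zeros---is exactly that of the paper. The asymptotic part (let $k\to\infty$, use $\zeta(0)=-\tfrac12$) is also the same. But two steps in your write-up are genuine gaps, and the second one is the reason the paper is organized differently.

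First, your ``short lemma'' (that $M$-weak independence forces the orbit closure to be $\varepsilon_M$-dense with $\varepsilon_M\to0$) is asserted with only a one-line sketch that does not constitute a proof; the heuristic ``distribute the phase correction over the support of a shortest relation'' does not obviously control all coordinates simultaneously, especially in the presence of several independent relations. The paper does not prove any such density lemma; instead it invokes the Anderson--Stark oscillation theorem (Proposition~\ref{propAS}), which packages this step cleanly and yields the explicit factor $2N/(N{+}1)$ in place of your $\cos\varepsilon_M$. Note too that Anderson--Stark requires a \emph{two-part} notion of $N$-independence: besides ruling out relations among the chosen ordinates (your condition), one must also rule out that any bounded combination of them equals one of the \emph{other} poles of $G_k$ on the segment $[0,T]$. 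This second condition is why the paper runs $m-n$ additional LLL reductions, one per excluded zero; your proposal omits it.

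Second, and more seriously, your Gaussian smoothing forces you to control an infinite tail $\tau(T)=\sum_{\gamma>T}2\abs{c_\gamma}W_\gamma$ with $c_\gamma=\zeta(\rho/k)/(\rho\,\zeta'(\rho))$. To make $\tau(T)$ \emph{explicit} you need an explicit, effective upper bound on $\abs{c_\gamma}$ for large $\gamma$, hence an explicit lower bound on $\abs{\zeta'(\rho)}$. No such bound is known, even under RH; the Cauchy--Schwarz device you mention would at best reduce to an explicit form of a discrete mean like $\sum_{\gamma\le T}\abs{\zeta'(\rho)}^{-2}$, which is likewise unavailable with explicit constants. The paper sidesteps this completely by choosing an \emph{admissible} weight $\kappa_T$ with compact support (the Jurkat--Peyerimhoff kernel \eqref{eqnKernel}); the sum over zeros is then literally finite, no tail ever appears, and the uncontrolled quantity $1/\abs{\zeta'(\rho)}$ enters only for the finitely many low-lying zeros that are computed numerically. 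That design choice, together with condition~(b) of $N$-independence to handle the remaining poles below height $T$, is precisely what makes the argument go through with fully explicit constants.
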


One expects however that very large $x$  are required for $R_k(x)/x^{1/2k}$ to exhibit large oscillations.
Recently Meng \cite{Meng}, building on work of Ng \cite{Ng}, conjectured that for each $k\geq2$ there exists a constant $\beta_k>0$ such that
\[
\limsup_{x\rightarrow\infty} \frac{R_k(x)}{x^{\frac{1}{2k}} (\log\log x)^{\frac{k-1}{2k}} (\log\log\log x)^{\frac{1}{4k}}} = \beta_k,
\]
and
\[
 \liminf_{x\rightarrow\infty} \frac{R_k(x)}{x^{\frac{1}{2k}} (\log\log x)^{\frac{k-1}{2k}} (\log\log\log x)^{\frac{1}{4k}}} = -\beta_k.
\]

Here, we investigate the distribution of square-free and cube-free integers in further detail, and find that $R_2(x)/x^{1/4}$ and $R_3(x)/x^{1/6}$ do not exhibit oscillations nearly as large as those guaranteed by Theorem~\ref{thmOscillate} over a very large interval.

\begin{thm}\label{statue}
If $0< x \leq 10^{18}$ then $-1.12543 x^{1/4} < R_2(x) < 1.11653 x^{1/4}$.
\end{thm}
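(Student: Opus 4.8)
The plan is to prove Theorem~\ref{statue} by a direct computation of $R_2(x)$ over the full range $0<x\leq10^{18}$, using the elementary fact that $Q_2$ is a step function to reduce the problem to finitely many integer evaluations. On each interval $[n,n+1)$ the count $Q_2(x)=Q_2(n)$ is constant while $x\mapsto x/\zeta(2)$ is strictly increasing, so $R_2(x)/x^{1/4}=\bigl(Q_2(n)-x/\zeta(2)\bigr)x^{-1/4}$ is strictly decreasing there. Consequently the supremum of $R_2(x)/x^{1/4}$ over such an interval is attained at $x=n$, with value $R_2(n)/n^{1/4}$, and the infimum is the left-hand limit at $n+1$, namely $\bigl(Q_2(n)-(n+1)/\zeta(2)\bigr)(n+1)^{-1/4}$; for $0<x<1$ the ratio is simply $-x^{3/4}/\zeta(2)$, which is bounded and tends to $0$, so nothing is lost near the origin. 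I would first isolate this as a short lemma, so that it remains only to determine $\max_{1\leq n\leq10^{18}}R_2(n)/n^{1/4}$ and $\min_{1\leq m\leq10^{18}}\bigl(Q_2(m-1)-m/\zeta(2)\bigr)m^{-1/4}$.

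The computational core is to generate, for every integer $n$ up to $10^{18}$, the indicator that $n$ is square-free---equivalently the running value of $Q_2(n)$ and hence of $R_2(n)=Q_2(n)-n/\zeta(2)$---while maintaining the running extremes above. I would do this with a segmented sieve: first sieve square-freeness (or the M\"obius function) for $d\leq10^9=\sqrt{10^{18}}$, then sweep $[1,10^{18}]$ in blocks, crossing off in each block the multiples of $p^2$ for primes $p$ below the square root of the top of the block. The dominant cost is the number of marking events, $\sum_{p}\lfloor 10^{18}/p^2\rfloor\approx 0.45\cdot10^{18}$, together with the per-block bookkeeping of start positions for the $\approx 5\cdot10^7$ relevant primes; this is a very large but feasible computation with standard segmented-sieve engineering. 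As an independent check, values of $Q_2(n)$ at checkpoints can be recomputed from $Q_2(n)=\sum_{d\leq\sqrt n}\mu(d)\lfloor n/d^2\rfloor$.

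A numerical subtlety must be handled with care: $Q_2(n)$ and $n/\zeta(2)$ are each of size about $6\cdot10^{17}$, whereas their difference $R_2(n)$ is only of size about $3\cdot10^{4}$, so the subtraction must be carried out with a rigorously rounded, high-precision value of $1/\zeta(2)=6/\pi^2$; keeping, say, thirty significant digits makes the arithmetic error in each $R_2(n)$ at most about $10^{-12}$, far smaller than the accuracy needed here. Running the sieve then shows that $R_2(x)/x^{1/4}$ attains a maximum that is strictly below $1.11653$ and a minimum (approached only as a left-hand limit) strictly above $-1.12543$ on the range, which gives the stated bounds $-1.12543x^{1/4}<R_2(x)<1.11653x^{1/4}$. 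The main obstacle is squarely the scale and rigor of the computation: organizing the segmented sieve so that $10^{18}$ integers can be processed in a realistic amount of time---in particular handling the tens of millions of primes and their per-block start positions efficiently---and certifying, via interval-style arithmetic, that the reported extreme ratios are provably correct and not artifacts of floating-point cancellation.
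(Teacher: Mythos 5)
Your proposal is correct and takes essentially the same approach as the paper: compute $Q_2$ by a segmented sieve of squares of primes over $(0,10^{18}]$, exploit that $R_2(x)/x^{1/4}$ is monotone on each unit interval where $Q_2$ is constant so that extremes occur only at jump points (left endpoint for maxima, left-hand limit at the next jump for minima), and handle the catastrophic cancellation in $Q_2(x)-x/\zeta(2)$ carefully. The paper's only material deviation is an implementation choice---it tracks the exact $64$-bit integer $V_2Q_2(x)-U_2x$ for a fixed rational approximation $U_2/V_2\approx1/\zeta(2)$ rather than using extended-precision floating point, and speeds up the sweep with a fast path driven by a priori estimates of the extremes---but the underlying argument is the one you describe.
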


\begin{thm}\label{statue2}
If $0< x \leq 10^{18}$ then $-1.13952 x^{1/6} < R_3(x) < 1.27417 x^{1/6}$.
\end{thm}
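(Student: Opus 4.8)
The assertion is a finite verification, and the first step is to turn the statement about all real $x\in(0,10^{18}]$ into one about integers. On any interval $[n,n')$ spanned by consecutive cube-free integers $n<n'$ the function $Q_3$ is constant, so there $R_3(x)=Q_3(n)-x/\zeta(3)$; differentiating $R_3(x)/x^{1/6}=\bigl(Q_3(n)-x/\zeta(3)\bigr)x^{-1/6}$ and using $Q_3(n)\ge1$ and $x>0$ shows that $R_3(x)/x^{1/6}$ is strictly decreasing on $[n,n')$. Hence its supremum on each such interval is attained at the left endpoint and its infimum is the left-hand limit at $n'$; collecting these, and writing $Q_3(m)-1=Q_3(m-1)$ for cube-free $m$, one gets
\[
\sup_{0<x\le10^{18}}\frac{R_3(x)}{x^{1/6}}=\max_{\substack{1\le m\le10^{18}\\ \text{$m$ cube-free}}}\frac{Q_3(m)-m/\zeta(3)}{m^{1/6}}
\]
and
\[
\inf_{0<x\le10^{18}}\frac{R_3(x)}{x^{1/6}}=\min\left(\ \min_{\substack{1\le m\le10^{18}\\ \text{$m$ cube-free}}}\frac{Q_3(m)-1-m/\zeta(3)}{m^{1/6}},\ \ \frac{R_3(10^{18})}{(10^{18})^{1/6}}\right),
\]
the last term covering the (partial) interval ending at $10^{18}$. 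It therefore suffices to run through the cube-free integers $m\le10^{18}$, maintaining the running count $Q_3(m)$ and the two ratios above.

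To execute this I would sieve the cube-free integers in consecutive blocks $[a,a+L)$: for each prime $p\le10^6$ (the only relevant ones, since $(10^{18})^{1/3}=10^6$) mark the multiples of $p^3$ in the block; the unmarked integers are exactly the cube-free ones, and a running tally gives $Q_3(m)$ throughout. At every surviving $m$ one updates the current maximum and minimum of the displayed quantities. As an independent check I would also evaluate $Q_3(x)=\sum_{d\le x^{1/3}}\mu(d)\lfloor x/d^3\rfloor$ directly at many checkpoints and confirm agreement with the sieved counts, and run two independent implementations of the whole search.

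Rigor of the arithmetic is the one nontrivial analytic point. The term $m/\zeta(3)$ involves Ap\'ery's constant, and both it and $m^{1/6}$ must be handled so that every comparison is certified — for instance in interval arithmetic, or in fixed high precision with outward rounding — so that the extreme ratios come with rigorous upper and lower bounds. Since the constants $1.27417$ and $1.13952$ are rounded outward from the genuine extremes (which the search also locates, along with the values of $x$ attaining them), thirty or so significant digits of working precision leaves a comfortable margin; the range $x<1$, where $R_3(x)=-x/\zeta(3)$, is trivial and is in fact already captured by the $m=1$ term above.

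The main obstacle is simply the scale: $(0,10^{18}]$ contains about $10^{18}/\zeta(3)\approx 8.3\times10^{17}$ cube-free integers, so the sieve and the record-keeping must be implemented efficiently — wheel sieving, cache-aware segmentation, and distribution of the block computations — while every arithmetic step is performed with guaranteed rounding so that the final strict inequalities hold with certainty. The reduction in the first paragraph and the cross-checks against the Dirichlet-type formula for $Q_3$ are, by comparison, routine.
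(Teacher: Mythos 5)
Your proposal matches the paper's method: reduce to the points of increase of $Q_3$ (i.e., the cube-free integers) via monotonicity of $R_3(x)/x^{1/6}$ on intervals where $Q_3$ is constant, run a segmented sieve on cubes of primes up to $10^6$ while maintaining the running count, take the value of $Q_3$ before the jump for minima and after the jump for maxima, and guard the arithmetic (the paper uses a fixed rational approximation $U_3/V_3$ to $1/\zeta(3)$ and tracks the integer $V_3 Q_3(x)-U_3 x$, which is one concrete realization of the guaranteed-rounding scheme you describe). Your cross-check against $\sum_{d\le x^{1/3}}\mu(d)\lfloor x/d^3\rfloor$ is likewise what the paper does, both for verification and for precomputing interval bounds that enable a fast path in the sieve.
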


\upd{Theorem \ref{statue} improves on the range $0< x \leq 3.5\cdot 10^{14}$ computed by Kotnik and van de Lune (see \cite{Aria}), who reported that $\abs{R_{2}(x)} < 1.126 x^{1/4}$ in this range.
(A location near $x=1.54\cdot 10^{14}$ that produces a value close to $1.126$ is reported in Section~\ref{secSquarefree}.)}

This article is organized in the following way.
Section~\ref{secWeakInd} reviews the analytic tools required in our method, and describes the notion of weak independence of a set of real numbers and its relationship to oscillation problems in number theory.
Section~\ref{secOscillations} details the computations that allow us to prove Theorem~\ref{thmOscillate}.
Section~\ref{secSquarefree} presents calculations on $R_2(x)$ and related quantities and establishes Theorem~\ref{statue}.
Section~\ref{secCubefree} describes calculations on $R_3(x)$ and related items and verifies Theorem~\ref{statue2}.

\section{Weak independence and oscillations}\label{secWeakInd}

Let $\mu^{(k)}(n)$ denote the characteristic function for the $k$-free integers, so that $\mu^{(2)}(n)=\mu^2(n)$, where $\mu(n)$ denotes the usual M\"obius function.
Writing $s=\sigma+it$ with $\sigma$ and $t$ real, for $\sigma>1$ we have
\begin{equation*}\label{plane}
\frac{\zeta(s)}{\zeta(ks)} = \sum_{n=1}^{\infty} \frac{\mu^{(k)}(n)}{n^{s}} = s \int_{1}^{\infty} \frac{Q_k(x)}{x^{s+1}}\, dx = \frac{s}{\zeta(k)(s-1)} + s\int_{1}^{\infty} \frac{R_k(x)}{x^{s+1}}\, dx.
\end{equation*}
Recasting this as
\begin{equation}\label{train}
\frac{\zeta(s)}{\zeta(ks)} - \frac{s}{\zeta(k)(s-1)} = s\int_{1}^{\infty} \frac{R_k(x)}{x^{s+1}}\, dx,
\end{equation}
we see that if $R_k(x) = O(x^{1/2k})$, then the right side of~(\ref{train}) is analytic for $\sigma > 1/2k$, hence so too is the left side.
\upd{(Note the left side is analytic at $s=1$ since the residue of $\zeta(s)$ at $s=1$ is $1$.)
We should like to conclude from this that there are no zeros of $\zeta(ks)$ for $\sigma > 1/2k$, which implies the Riemann hypothesis.
We shall do this following an argument thoughtfully provided to us by Keith Conrad.
Suppose that the Riemann hypothesis were false, so that $\zeta(s_{0}) = 0$ for some $s_{0} = \sigma_{0} + it_{0}$, with $1/2 < \sigma_{0} < 1$ and $t_{0}>0$.
Then $\zeta(ks)$ has a zero at $s= s_{0}/k$.
Given that the left side of \eqref{train} is analytic for $\sigma>1/2k$, we must have that $\zeta(s_{0}/k) = 0$ as well. Hence, by the functional equation, $\zeta(s_1) = 0$ with $s_1 = 1- s_{0}/k$.
Note that $\Re(s_1) = 1- \sigma_{0}/k > 1/2$ and $\Im(s_1) = t_{0}/k$. 
We iterate this process, manufacturing a sequence $\{s_i\}$ with $\zeta(s_i) = 0$, $\Re(s_i) > 1/2$, and $\Im(s_i)=t_0/k^i$.
This produces a nontrivial zero of $\zeta(s)$ lying very close to the real axis, a contradiction.}

By following an argument \upd{described for example} by Ingham \cite{Ingham1942} (see also \cite{MossT1,MossT2}), one can also show that if $R_k(x) < C x^{1/2k}$ or if $R_k(x) > -C x^{1/2k}$ for a positive constant $C$ and all sufficiently large $x$, \upd{then the Riemann hypothesis follows} and the zeros of the Riemann zeta function are simple.
Thus, if the Riemann hypothesis is false, or if the zeta function has a zero on the critical line with multiplicity greater than $1$, then $R_k(x)/x^{1/2k}$ must exhibit unbounded oscillations in both the positive and negative directions.
In what follows we therefore assume the Riemann hypothesis and the simplicity of the zeros.

Let
\begin{equation}\label{car}
F_k(s) = \frac{\zeta(s)}{s\zeta(ks)} - \frac{1}{\zeta(k)(s-1)}.
\end{equation}
It follows that 
\begin{equation}\label{bike}
G_k(s) := F_k\left(s+ \frac{1}{2k}\right) = \int_{0}^{\infty} R_k(e^{u}) e^{-u/2k} e^{-su}\, du.
\end{equation}
Letting $A_k(u) = R_k(e^{u})e^{-u/2k}$, we see by~(\ref{bike}) that $G_k(s)$ is the Laplace transform of $A_k(u)$.
By assumption, the function $G_k(s)$ has a simple pole at $s=i\gamma/k$ for each $\gamma$ corresponding to a simple zero of the zeta function at $\rho=\frac{1}{2}+i\gamma$, but is otherwise analytic for $\sigma\geq 0$.
Let $T>1$ be a real number, and let $m=m(T)$ denote the number of zeros $\rho=\frac{1}{2}+i\gamma$ of $\zeta(s)$ with $0<\gamma<T$.
Let
\[
G_k^*(s) = \sum_{\abs{n}=1}^m \frac{\Res(G_k,i\gamma_n/k)}{s-i\gamma_n/k} = \sum_{\abs{n}=1}^m \frac{\zeta(\rho_n/k)}{\rho_n\zeta'(\rho_n)(s-i\gamma_n/k)},
\]
so that $G_k(s)-G_k^*(s)$ is analytic in the region $\sigma\geq0$, $\abs{t}\leq T/k$, and let $B_k^*(u)=B_{k,T}^*(u)$ denote the inverse Laplace transform of $G_k^*(s)$, with an admissible weight function attached:
\begin{equation}\label{eqnBk}
B_k^*(u) = 2\Re \sum_{0<\gamma_n<T} \frac{\zeta(\rho_n/k)}{\rho_n\zeta'(\rho_n)} \kappa_{T/k}(\gamma_n/k) e^{i\gamma_n u/k}.
\end{equation}
We say a weight function $\kappa_T(t)$ is \textit{admissible} if it is nonnegative, even, supported on $[-T,T]$, takes the value $1$ at $t=0$, and is the Fourier transform of a nonnegative function.
A result of Ingham \cite{Ingham1942} (see also \cite{MossT1}) then implies that
\begin{equation*}\label{eqnChain}
\liminf_{u\to\infty} A_k(u) \leq \liminf_{u\to\infty} B_k^*(u) \leq B_k^*(v) \leq \limsup_{u\to\infty} B_k^*(u) \leq \limsup_{u\to\infty} A_k(u),
\end{equation*}
for any positive real number $v$.
Thus, one may establish large oscillations in $A_k(u)$ by showing that $B_k^*(u)$ attains large values in the positive and negative directions, or indeed by computing particular values for $B_k^*(v)$.

Using Perron's formula \cite[Lem.\ 3.12]{Touchy} on the Dirichlet series $\zeta(s)/\zeta(ks) = \sum_{n=1}^\infty \mu^{(k)}(n)/n^s$, one finds that
\[
\frac{R_k(e^u)-1}{e^{u/2k}} = \lim_{T\to\infty} 2\Re \biggl(\sum_{0<\gamma_n<T} \frac{\zeta(\rho_n/k)}{\rho_n\zeta'(\rho_n)} e^{i\gamma_n u/k}\biggr) - \sum_{j=1}^\infty \frac{\zeta(-2j/k)}{2j\zeta'(-2j)}e^{-u(4j+1)/2k}.
\]
Since the latter sum decays rapidly in $u$, one might expect $B_k^*(u)$ from \eqref{eqnBk} to approximate our normalized function $R_k(e^u)e^{-u/2k}$ when $T$ is large, and when one chooses the (inadmissible) weight function $\kappa_T(t)=1$.
Figure~\ref{figBkstar} displays plots of $B_k^*(\log x)$ with $\kappa_T(t)=1$ for $k=2$, $3$, $4$, and $5$, using the first $2000$ zeros of the zeta function in the upper half plane (so $T=2516$).
Here, $200$ equispaced values per decade were used to plot the functions.
One may compare part~(a) in this figure with the actual plot of minimum and maximum values of $R_2(x)/x^{1/4}$ over a similar interval in Figure~\ref{f:true_error}.
Note for instance the large negative spike here just past $x=10^{14}$, which plays a role in Theorem~\ref{statue}.
Similarly, one may compare Figure~\ref{figBkstar}(b) with Figure~\ref{f:true_cube_error}.

\begin{figure}[tb]
  \begin{center}
    \includegraphics{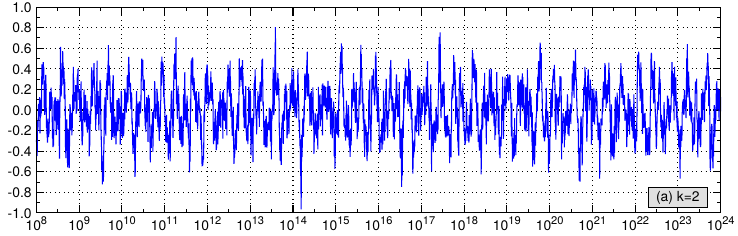} \\
    \includegraphics{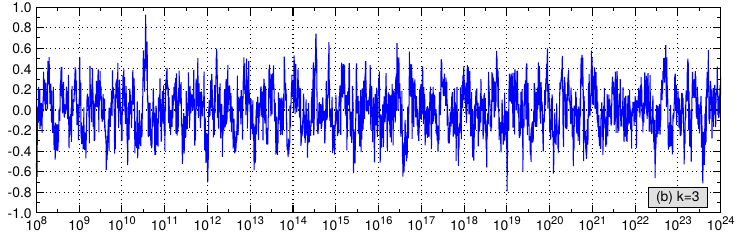} \\
    \includegraphics{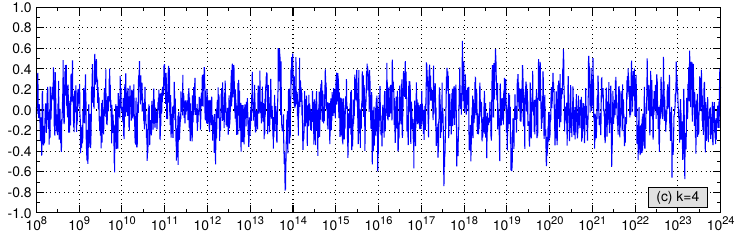} \\
    \includegraphics{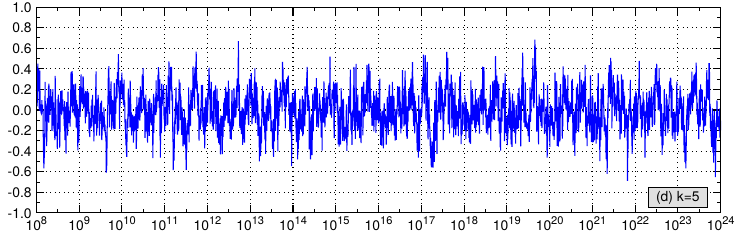} \\
  \end{center}
  \caption{$B_k^*(\log x)$ from~\eqref{eqnBk}, using $\kappa_T(u)=1$, and $T=2516$, so incorporating $m=2000$ zeros of $\zeta(s)$, over $10^8\leq x\leq10^{24}$.}\label{figBkstar}
\end{figure}

Again following Ingham (see also \cite[Lem.~3]{Meng}), we find that large oscillations in $B_k^*(u)$ would follow if one could establish that the $\{\gamma_n\}$ are linearly independent over $\mathbb{Q}$.
Using simultaneous approximation, for fixed $k$ one may choose sequences $\{v_j\}$ and $\{w_j\}$, with $v_j\to\infty$ and $w_j\to\infty$, so that  for each $n\leq m$ we have
\[
\arg\left(\frac{\zeta(\rho_n/k)}{\rho_n\zeta'(\rho_n)}e^{i\gamma_n v_j/k}\right)<\epsilon
\]
for each $j$. Similarly for the $w_j$ we can require the argument of each term to be very near $\pi$.
In addition, under the assumption of the Riemann hypothesis and the simplicity of the zeros of the zeta function, using a method similar to that employed in \cite[Section 14.27]{Touchy} for $\sum_n 1/\abs{\rho_n\zeta'(\rho_n)}$, one may show that\footnote{Since $\Re(\rho_{n}/k) < 1/2$ we need to adapt slightly the method given by Ingham \cite[pp.\ 317--318]{Ingham1942}: choosing $1/2<\alpha<\beta<1$ in Ingham's proof suffices.}
\[
\sum_{n=1}^\infty \abs{\frac{\zeta(\rho_n/k)}{\rho_n\zeta'(\rho_n)}} = \infty.
\]
 We therefore conclude that linear independence of the ordinates of the zeros of $\zeta(s)$ in the upper half plane, or even if only finitely many such linear relations exist, implies that $R_k(x)/x^{1/2k}$ must exhibit unbounded oscillations.
Thus, $R_k(x) = O(x^{1/2k})$ would imply that there are infinitely many such dependencies.
One therefore suspects, by analogy with similar work on the Mertens function \cite{Best} and sums involving the Liouville function \cite{MossT2}, that $R_k(x)$ is bounded neither above nor below by a constant multiple of $x^{1/2k}$.
As in those earlier investigations, one may obtain information on the magnitude of the oscillations of this function by drawing on the concept of \textit{weak independence}.
Roughly speaking, while establishing linear independence of the ordinates of the zeros of the zeta function in the upper half plane would establish \textsl{unbounded} oscillations, establishing weak independence would allow one to conclude that \textsl{large} oscillations must exist.

We review the definition of weak independence and its application to oscillation problems. 
Additional details may be found in \cite[\S 5]{MossT2}.
Let $\Gamma$ denote a set of positive numbers, and let $\Gamma'$ denote a subset of $\Gamma\cap[0,T]$, for some real parameter $T$.
We say $\Gamma'$ is \textit{$N$-independent in $\Gamma\cap[0,T]$} if the following conditions both hold.
\begin{enumerate}[(a)]
\item\label{itemWI1} If $\sum_{\gamma\in\Gamma'} c_\gamma \gamma = 0$ with each $c_\gamma\in\mathbb{Z}$ and each $\abs{c_\gamma}\leq N$, then each $c_\gamma=0$.
\item\label{itemWI2} If $\sum_{\gamma\in\Gamma'} c_\gamma \gamma = \gamma^*$ for some $\gamma^*\in\Gamma\cap[0,T]$ with each $c_\gamma\in\mathbb{Z}$ and each $\abs{c_\gamma}\leq N$, then $\gamma^*\in\Gamma'$, $c_{\gamma^*}=1$, and every other $c_\gamma=0$.
\end{enumerate}

Anderson and Stark \cite{AndStark} obtained an analogue of Ingham's result for the setting of weak independence. 
Suppose $g(u)$ is a piecewise-continuous real function which is bounded on finite intervals, and let $G(s)$ denote its Laplace transform,
\[
G(s) = \int_0^\infty g(u)e^{-su}\,du.
\]
Suppose $G(s)$ is absolutely convergent in the half-plane $\sigma>\sigma_0$, and can be analytically continued back to $\sigma\geq0$, except for simple poles on the imaginary axis occurring at $\pm i\gamma$ for $\gamma\in\Gamma$, and possibly at $0$ as well.
Anderson and Stark established the following result in 1981; a proof may also be found in \cite[Thm.~4.1]{MossT2}.

\begin{prop}[Anderson and Stark]\label{propAS}
Let $\Gamma'\subset\Gamma$, and suppose $\Gamma'$ is $N$-independent in $\Gamma\cap[0,T]$.
Then, using the notation above,
\begin{equation*}
\liminf_{u\to\infty} g(u) \leq \Res(G,0) - \frac{2N}{N+1}\sum_{\gamma\in\Gamma'}\kappa_T(\gamma)\abs{\Res(G,i\gamma)}
\end{equation*}
and
\begin{equation*}
\limsup_{u\to\infty} g(u) \geq \Res(G,0) + \frac{2N}{N+1}\sum_{\gamma\in\Gamma'}\kappa_T(\gamma)\abs{\Res(G,i\gamma)},
\end{equation*}
where $\kappa_T(t)$ is an admissible weight function.
\end{prop}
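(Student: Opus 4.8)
The plan is to reduce the proposition to a purely harmonic-analytic statement about the trigonometric polynomial that encodes the principal parts of $G$ on the imaginary axis, and then to exploit a product of Fej\'er kernels. Write
\[
g^*(v) = \Res(G,0) + 2\Re\sum_{\gamma\in\Gamma\cap[0,T]}\kappa_T(\gamma)\Res(G,i\gamma)\,e^{i\gamma v}
\]
for the inverse Laplace transform, with the weight $\kappa_T$ attached, of the truncated principal part $s\mapsto \Res(G,0)/s+\sum_{\gamma\in\Gamma\cap[0,T]}\bigl(\Res(G,i\gamma)/(s-i\gamma)+\overline{\Res(G,i\gamma)}/(s+i\gamma)\bigr)$; here we use that $g$ is real, so $\Res(G,-i\gamma)=\overline{\Res(G,i\gamma)}$, and that $\kappa_T$ is even and supported in $[-T,T]$, which is exactly what makes $G$ minus this truncated principal part analytic in the strip $\sigma\ge0$, $\abs{t}\le T$. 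Since $\Gamma\cap[0,T]$ is finite (as in all applications; see the final remark), $g^*$ is a genuine trigonometric polynomial, with Fourier coefficients $\widehat{g^*}(0)=\Res(G,0)$ and $\widehat{g^*}(\pm\gamma)=\kappa_T(\gamma)\Res(G,\pm i\gamma)$ for $\gamma\in\Gamma\cap(0,T]$. The result of Ingham recalled above, which underlies the chain of inequalities following~\eqref{eqnBk}, gives
\[
\liminf_{u\to\infty}g(u)\le\liminf_{v\to\infty}g^*(v)\quad\text{and}\quad\limsup_{v\to\infty}g^*(v)\le\limsup_{u\to\infty}g(u),
\]
so it suffices to prove
\[
\limsup_{v\to\infty}g^*(v)\ge\Res(G,0)+\frac{2N}{N+1}\sum_{\gamma\in\Gamma'}\kappa_T(\gamma)\abs{\Res(G,i\gamma)};
\]
the matching upper bound for $\liminf_{v\to\infty}g^*(v)$ then follows by applying this to $-g$, since $\Res(-G,0)=-\Res(G,0)$ and $\abs{\Res(-G,i\gamma)}=\abs{\Res(G,i\gamma)}$.

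Write $\Gamma'=\{\gamma_1,\dots,\gamma_r\}$ and $c_j=\Res(G,i\gamma_j)$, and let
\[
F_N(\theta)=\sum_{n=-N}^{N}\Bigl(1-\tfrac{\abs{n}}{N+1}\Bigr)e^{in\theta}=\frac{1}{N+1}\biggl(\frac{\sin((N+1)\theta/2)}{\sin(\theta/2)}\biggr)^{2}
\]
be the Fej\'er kernel of order $N$: it is nonnegative, with Fourier coefficient $1$ at $n=0$ and $N/(N+1)$ at $n=\pm1$. I would then fix phases $\psi_j=-\arg c_j$ and form the nonnegative trigonometric polynomial $Q(v)=\prod_{j=1}^{r}F_N(\gamma_j v-\psi_j)$, whose frequencies are exactly the numbers $\sum_{j=1}^{r}n_j\gamma_j$ with each $\abs{n_j}\le N$, and compute the mean value $\mathcal M\{h\}:=\lim_{V\to\infty}\tfrac1V\int_0^V h(v)\,dv$ of $g^*(v)Q(v)$ through $\mathcal M\{g^*Q\}=\sum_{\lambda}\widehat{g^*}(\lambda)\widehat Q(-\lambda)$. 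The only $\lambda$ contributing are the frequencies of $g^*$: condition~\ref{itemWI1} forces $\widehat Q(0)=1$ (no nontrivial admissible tuple produces the zero frequency), and condition~\ref{itemWI2}, applied to the relation $\sum_jn_j\gamma_j=\pm\gamma$ for $0<\gamma\le T$, shows that $\widehat Q(\pm\gamma)=0$ unless $\gamma=\gamma_{j_0}\in\Gamma'$, in which case the only admissible tuple is $n_{j_0}=\pm1$ with the remaining $n_j=0$, so $\widehat Q(\pm\gamma_{j_0})=\tfrac{N}{N+1}e^{\mp i\psi_{j_0}}$. Hence the $\gamma_j$-term of the mean is $\tfrac{N}{N+1}\kappa_T(\gamma_j)\bigl(c_je^{i\psi_j}+\overline{c_je^{i\psi_j}}\bigr)=\tfrac{2N}{N+1}\kappa_T(\gamma_j)\abs{c_j}$ by the choice $\psi_j=-\arg c_j$, and therefore
\[
\mathcal M\{g^*Q\}=\Res(G,0)+\frac{2N}{N+1}\sum_{\gamma\in\Gamma'}\kappa_T(\gamma)\abs{\Res(G,i\gamma)}.
\]
Since $Q\ge0$, $\mathcal M\{Q\}=\widehat Q(0)=1$, and $g^*$ is a bounded trigonometric polynomial, a routine estimate (bound $g^*(v)Q(v)\le(\limsup_{w\to\infty}g^*(w)+\epsilon)\,Q(v)$ for $v$ beyond some $v_0(\epsilon)$, integrate, and let $\epsilon\to0$) gives $\mathcal M\{g^*Q\}\le\limsup_{v\to\infty}g^*(v)$; combined with Ingham's inequality this proves the $\limsup$ bound, and hence, as noted, the $\liminf$ bound.

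The analytically substantial ingredient here is Ingham's lemma itself, which transfers the oscillation of $g^*$ back to $g$ via a contour shift past the imaginary-axis poles together with a Riemann--Lebesgue decay estimate for the analytic remainder of $G-G^*$; I would take this as known, since it is quoted in the text. The genuinely new device is the product of Fej\'er kernels, and the point I expect to be the crux is the frequency bookkeeping: one must be sure that no frequency $\sum_jn_j\gamma_j$ of $Q$, with each $\abs{n_j}\le N$, coincides with a frequency $\gamma^*\in\Gamma\cap[0,T]$ of $g^*$ except through the trivial relation $\gamma^*=\gamma_{j_0}\in\Gamma'$ with $n_{j_0}=1$ and the rest $0$. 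This is precisely what the hypothesis that $\Gamma'$ is $N$-independent in $\Gamma\cap[0,T]$ provides, with the coefficient bound $N$ in conditions~\ref{itemWI1} and~\ref{itemWI2} matched exactly to the degree $N$ of the Fej\'er kernel, so that its $n=\pm1$ coefficients $N/(N+1)$ produce the constant $2N/(N+1)$. A minor technical point, automatic in the application to the zeros of $\zeta(s)$, is that $\Gamma\cap[0,T]$ should be finite (or, more generally, $\sum_{\gamma\in\Gamma\cap[0,T]}\abs{\Res(G,i\gamma)}<\infty$), so that $g^*$ and $g^*Q$ are honest trigonometric polynomials and the mean values above genuinely exist.
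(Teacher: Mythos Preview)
Your argument is correct. The paper does not actually supply a proof of this proposition; it merely cites the original article of Anderson and Stark and \cite[Thm.~4.1]{MossT2}. Your approach---form $g^*$ via Ingham's weighted principal part, build the nonnegative trigonometric polynomial $Q(v)=\prod_{j}F_N(\gamma_j v-\psi_j)$ from Fej\'er kernels, use the $N$-independence hypothesis to pin down $\widehat Q(0)=1$ and $\widehat Q(\pm\gamma)$ for $\gamma\in\Gamma\cap[0,T]$, compute $\mathcal M\{g^*Q\}$, and conclude via $Q\ge0$, $\mathcal M\{Q\}=1$, and Ingham's inequality---is precisely the standard proof given in those references, so there is nothing to compare beyond noting that you have reconstructed it faithfully.
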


We apply this result to the function $G_k(s)$ from~\eqref{bike}.
While the Fej\'er kernel qualifies as an admissible weight function, we choose the function
\begin{equation}\label{eqnKernel}
\kappa_{T}(t) = \begin{cases}\displaystyle \left(1- \frac{\abs{t}}{T}\right)\cos\left(\frac{\pi t}{T}\right) + \frac{1}{\pi}\sin\left(\frac{\pi\abs{t}}{T}\right), &\quad \abs{t} \leq T, \\
0, &\quad \abs{t}>T,
\end{cases}
\end{equation}
which gives higher weight to values near the origin, at the expense of values farther away.
This kernel was introduced by Jurkat and Peyerimhoff \cite{JP}, and was employed recently by the first and third authors \cite{MossT2} in their study of oscillations in sums involving the Liouville function.

Let $\rho_n=\frac{1}{2}+i\gamma_n$ denote the $n$th zero of the Riemann zeta function on the critical line in the upper half plane, and let $\Gamma=\{\gamma_n\}_{n\geq1}$.
Suppose $\Gamma' = \{\gamma_{i_1}, \ldots, \gamma_{i_n}\}$ with $i_1<\cdots<i_n$ and $\gamma_{i_n}<T$ is $N$-independent in $\Gamma\cap [0,T]$.
Then, combining Proposition~\ref{propAS} with~\eqref{train}, \eqref{car}, and~\eqref{bike}, and observing that $\kappa_{T/k}(t/k)=\kappa_T(t)$ by~\eqref{eqnKernel}, we conclude that
\begin{equation}\label{horse}
\begin{split}
\liminf_{x\rightarrow\infty} \frac{R_k(x)}{x^{1/2k}} \leq -\frac{2N}{N+1} \sum_{j=1}^{n} \kappa_T(\gamma_{i_j}) \abs{\frac{\zeta(\rho_{i_j}/k)}{\rho_{i_j}\zeta'(\rho_{i_j})}},\\
\limsup_{x\rightarrow\infty} \frac{R_k(x)}{x^{1/2k}} \geq \frac{2N}{N+1} \sum_{j=1}^{n} \kappa_T(\gamma_{i_j}) \abs{\frac{\zeta(\rho_{i_j}/k)}{\rho_{i_j}\zeta'(\rho_{i_j})}}.
\end{split}
\end{equation}
In the next section we employ~\eqref{horse} to establish Theorem~\ref{thmOscillate}.

\section{Lower bounds on oscillations}\label{secOscillations}

We describe a method for computing a value for $N$ for which a particular subset of zeros $\Gamma'$ of size $n$ in $\Gamma\cap[0,T]$ is $N$-independent.
This follows the method employed in \cite{MossT2,MossT3}, and additional details may be found there.

Let $m=\abs{\Gamma\cap[0,T]}$; the value of $n\leq m$ will be chosen later.
We create $m-n+1$ matrices $M_0$, \ldots, $M_{m-n}$ in the following way.
Matrix $M_0$ has $n+1$ rows and $n$ columns, and is formed by augmenting the $n\times n$ identity matrix $I_n$ with the additional row $(\rd{2^b\gamma_{i_1}},\ldots,\rd{2^b\gamma_{i_n}})$.
Here, $b$ is a parameter governing the number of bits of precision required, and $\rd{x}$ denotes the integer nearest $x$.
For $1\leq j\leq m-n$, matrix $M_j$ has $n+2$ rows and $n+1$ columns, and is formed in a similar way, by appending a row of the form $(\rd{2^b\gamma_{i_1}},\ldots,\rd{2^b\gamma_{i_n}},\rd{2^b\gamma_j^*})$ to $I_{n+1}$, where $\gamma_j^*$ is selected from $(\Gamma\cap[0,T]) \backslash \Gamma'$.
The columns of matrix $M_j$ generate a $\mathbb{Z}$-lattice $\Lambda_j$ in $\mathbb{R}^n$ (when $j=0$) or in $\mathbb{R}^{n+1}$ (when $j>0$), and one may verify that if $\Gamma'$ is $N$-dependent in $\Gamma\cap[0,T]$, then one of these lattices must contain a relatively short nonzero vector.
For example, if condition~(\ref{itemWI1}) of $N$-independence were violated, so there exists a nontrivial $\mathbf{c}=(c_1,\ldots,c_n)\in\mathbb{Z}^{n}$ with each $\abs{c_j}\leq N$ and $\sum_{j=1}^n c_j \gamma_{i_j} = 0$, then $v=(c_1,\ldots,c_n,\sum_{j=1}^n c_j \rd{2^b\gamma_{i_j}})$ is a nonzero vector in $\Lambda_0$, and one may compute that $\abs{v}^2 \leq (\frac{n^2}{4}+n)N^2$.
A violation of requirement~(\ref{itemWI2}) for $N$-independence likewise implies the existence of a vector in one of the other lattices $\Lambda_j$ with length bounded by a similar expression in $n$ and $N$.
Thus, if we can determine a lower bound on the length of a nonzero vector $v$ in any of the lattices $\Lambda_j$, then we can choose $N$ to guarantee that $\Gamma'$ is $N$-independent in $\Gamma\cap[0,T]$.

We can obtain a lower bound on the length of a nonzero vector in an integer lattice by computing the Gram--Schmidt orthogonalization of a basis of the lattice: no nontrivial vector in the lattice can be smaller than the smallest vector in this orthogonalization (see \cite{MossT2} for a proof).
However, applying Gram--Schmidt to the bases described here would produce very poor results.
Instead, one applies the LLL lattice reduction algorithm \cite{LLL} to the given bases first, in order to produce alternative bases whose component vectors are in a sense more orthogonal, and then we apply Gram--Schmidt to this basis.
This allows us to compute a value $N_j$ for each lattice $\Lambda_j$.
We may then select $N=\min_j \{N_j\}$ and apply Proposition~\ref{propAS}.
We apply this strategy to establish our first theorem.

\begin{proof}[Proof of Theorem~\ref{thmOscillate}]
For $2\leq k\leq5$, we describe values for  $n$, $m$, $b$, and $T$ that allow us to conclude the stated bound of $3$.
We set $T=\gamma_{m+1}-\epsilon$, so that $\gamma_m < T < \gamma_{m+1}$ (here $\epsilon=10^{-10}$ certainly suffices), and some experimentation suggested that $b$ needed to grow linearly in $n$ in order to obtain suitable values for $N$.
This leaves choices for $n$ and $m$.
Once these are selected, we form $\Gamma'$ as the $n$ elements $\gamma\in\Gamma\cap[0,T]$ for which $\kappa_T(\gamma)\abs{\Res(G_k, i\gamma/k)} = \kappa_T(\gamma)\abs{\zeta(\rho/k)/\rho\zeta'(\rho)}$ is largest, where $\rho=\frac{1}{2}+i\gamma$ is a zero of the zeta function with $\gamma\in[0,T]$.
We have some latitude with $m$ and $n$, and therefore choose them to minimize our  computation time.
A large value for $n$ produces many terms in the sum in~\eqref{horse}, so $m$ may not need to be much larger than $n$ in order to achieve a desired result.
While this would mean relatively few matrices to process, the work to process each matrix scales empirically as approximately $n^4$ (and no worse than $O(n^{6+\epsilon})$ from the worst-case run times in LLL, since we choose $b$ linear in $n$).
Thus, computation times grow significantly with $n$, so computational capacity prevents us from picking $n$ too large.
On the other hand, if we restrict $n$ too much, then we need $m$ to be large in order to find enough zeros of the zeta function producing sufficient terms in~\eqref{horse} to achieve our desired bound, so we would have a large number of matrices to process.
This again imposes a computational constraint.
Balancing these considerations leads us to the values chosen here.

For $k=2$, we selected $n=320$, $m=3560$, and $b=9100$, and after applying LLL followed by Gram--Schmidt on these $3241$ matrices, we find the value $N=1630$ suffices.
For $k=3$, we needed only $n=240$, $m=2540$, and $b=6100$, and determine $N=1362$.
For $k=5$, we set $n=290$, $m=2880$, and $b=8000$, and compute that $N=1684$ suffices.
No additional computation was required for $k=4$, since our weak independence result for $k=2$ already sufficed to produce a constant larger than $3$ in~\eqref{board}.
These parameters, along with the value we obtain for $N$ in each calculation, are summarized in Table~\ref{tableWeakIndep}.
This table also lists the indices of the zeros of $\zeta(s)$ utilized in each calculation.

We remark that the LLL algorithm allows specifying a parameter $\delta\in(1/4,1)$, which governs the amount of work performed by this method.
Large values for $\delta$ produce better bases, at a cost of longer run times.
We selected $\delta=0.99$ throughout, as this allowed us to obtain useful results at smaller dimensions. 

Our calculations produce the following more precise values for the lower bound $C_k$ on required oscillations in $R_k(x)/x^{1/2k}$ (truncated at five decimal places):
\begin{equation*}
\begin{split}
C_2 = 3.00119,\; 
C_3 = 3.00096,\; 
C_4 = 3.04262,\; 
C_5 = 3.00187.   
\end{split}
\end{equation*}
For the asymptotic result, we compute the limiting value of the expression on the right in~\eqref{horse} as $k\to\infty$, using the data in the first row of Table~\ref{tableWeakIndep}.
\[
\frac{3260\abs{\zeta(0)}}{1631} \sum_{j=1}^{320} \frac{\kappa_T(\gamma_{i_j})}{\abs{\rho_{i_j}\zeta'(\rho_{i_j})}} = 0.74969468\ldots.\qedhere
\]
\end{proof}

\begin{table}[tb]
\caption{Weak independence calculations for $R_k(x)$.}\label{tableWeakIndep}
\begin{center}
\begin{tabularx}{\textwidth}{|c|c|c|c|c|X|}
\hline
$k$ & $n$ & $m$ & $b$ & $N$ & Indices of zeros of $\zeta(s)$ employed.\\\hline
2 & 320 & 3560 & 9100 & 1630 &
\tiny
1--37, 39--73, 76--83, 85--92, 94--103, 106--125, 128--132, 134--137,
140--147, 150--157, 159, 164--168, 170, 173--175, 177--182, 186, 187
189, 190, 192, 194, 196, 197, 200--205, 210--213, 215--217, 222--228,
233, 234, 237, 238, 240--243, 246--250, 254, 255, 258, 259, 263--266, 271,
273, 274, 276, 280, 286--291, 297--299, 302, 304, 305, 312--316, 323,
327, 328, 330, 331, 336--339, 342, 343, 353--355, 363--366, 368--370, 377--381,
389, 390, 394, 395, 398, 404--406, 419, 421--423, 431, 433, 434, 436, 437,
446--448, 459, 463, 464, 473, 476, 477, 490, 492, 493, 501, 502, 508, 509,
517, 520, 521, 529, 542, 543, 562, 563, 573, 574, 587, 606, 607, 619,
628, 629, 634, 693, 694, 717, 749, 750, 777, 778, 883, 922, 923, 996, 997,
1496, 1497.\\\hline
3 & 240 & 2540 & 6100 & 1362 &
\tiny
1--28, 30--67, 71--79, 81--83, 86--88, 90--94, 97, 98, 102--109, 112--115,
118--127, 133--136, 138--143, 152--158, 162--164, 167, 168, 170--172,
174--176, 185--189, 192, 193, 199, 200, 207--213, 222--225, 233, 234,
240, 242--248, 260, 261, 265--267, 278--283, 298, 299, 304--306, 315, 316,
322, 323, 338--343, 355--358, 363, 364, 368, 369, 378--381, 390, 391,
398, 399, 403, 404, 418--420, 442--445, 458, 463, 480, 481, 485, 486,
508, 509, 525, 526, 547, 548, 566, 579, 591, 606, 607, 717, 830, 996, 997.\\\hline
5 & 290 & 2880 & 8000 & 1684 &
\tiny
1--54, 56--59, 62--73, 75--82, 85--88, 90--101, 107, 111--115, 117--132,
134--136, 142, 144--153, 156, 157, 159, 163--165, 170, 171, 173--175,
178--187, 196--198, 203--210, 212, 213, 215, 216, 221--224, 233--243,
246--248, 258--262, 265--267, 270--272, 274, 278--280, 298--306, 315, 316,
318, 323, 324, 330--332, 335, 336, 338, 339, 342--344, 361--365, 368, 369,
398, 399, 401--404, 424, 429--434, 436, 437, 445, 446, 470--473, 493, 494,
499, 506--509, 534--536, 538, 539, 579, 580, 584, 585, 606, 607, 643, 644,
648, 651, 693, 694, 716--718, 723, 755, 756, 792, 794, 836, 837, 906,
936, 937, 985, 986.\\\hline
\end{tabularx}
\end{center}
\end{table}

The total computation time for this work was approximately $4.7$ core-years, using Intel Xeon Sandy Bridge processors on a large cluster maintained by the National Computational Infrastructure (NCI) in Canberra, Australia.

Last, we remark that the weak independence results summarized in Table~\ref{tableWeakIndep} allow us to deduce lower bounds on the oscillations of $R_k(x)/x^{1/2k}$ for any $k$.
While additional computations of a similar magnitude tuned to a specific value of $k$ would produce somewhat improved values, we may use the existing weak independence results in combination with~\eqref{horse} to obtain a bound for any $k$. 
For example, the $1630$-independence of the $320$ zeros employed in our calculation for $k=2$ produces the bounds $C_6=2.59030$, $C_7=2.43913$, $C_8=2.36244$, $C_9=2.23963$, and $C_{10}=2.11411$.
The values we obtain in this way for $6\leq k\leq100$ are exhibited in Figure~\ref{figOscBound100}.
We remark that $C_k>1$ for $k\leq45$, and $C_{100} = 0.76250$.

\begin{figure}[tb]
  \begin{center}
    \includegraphics{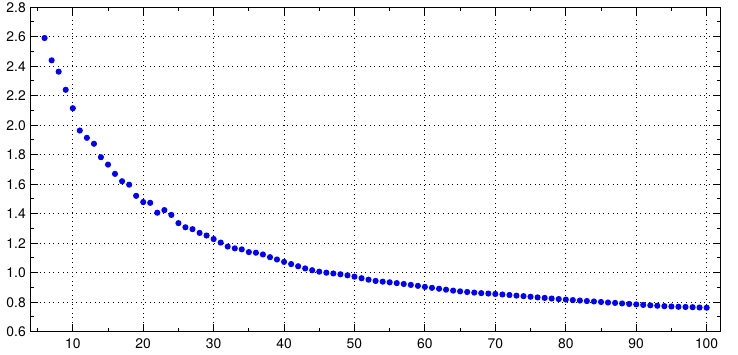}
    \caption{Lower bound $C_k$ on oscillations of $R_k(x)/x^{1/2k}$ for $6\leq k\leq100$.}\label{figOscBound100}
  \end{center}
\end{figure}

\section{Computations on square-free numbers}\label{secSquarefree}

The function $Q_2(x)$ may be computed using the formula~\cite{HW}
\begin{equation*}
  Q_2(x) = \sum_{a=1}^{\lfloor\sqrt x\rfloor} \mu(a)\biggl\lfloor\frac{x}{a^2}\biggr\rfloor,
\end{equation*}
which has a computational complexity of $O(x^{1/2+\epsilon})$.
This can be improved by taking advantage of the fact that $\lfloor x/a^2\rfloor$ is constant over lengthy intervals as $a$ approaches $x$.
Let $x_k=\bigl\lfloor\sqrt{x/k}\bigr\rfloor$, and let $M(x)=\sum_{n\leq x}\mu(n)$ be Mertens' function.
Then it is straightforward to show (see \cite{Pawl} for details) that
\begin{equation}
  Q_2(x) = \sum_{a=1}^{x_n} \mu(a) \biggl\lfloor\frac{x}{a^2}\biggr\rfloor
       + \sum_{k=1}^{n-1} M(x_k) - (n-1)M(x_n),
  \label{e:Qf}
\end{equation}
where $n$ is a positive integer satisfying $n\leq(x/4)^{1/3}$.
In our computations we employed $n=\bigl\lfloor 0.05\sqrt[3] x\bigr\rfloor$.
Even without a fast way to compute $M(x)$, formula (\ref{e:Qf}) can be used to great advantage when preparing a table of values of $Q_2(x)$, because the work of accumulating $\mu(n)$, i.e., of computing $M(x)$ for a sequence of increasing values of $x$, needs to be done only once.
This is especially true if $Q_2(x)$ is computed for many values of $x$ in one run.

In order to have a good idea about how $R_2(x)/x^{1/4}$ behaves for ``small'' values of $x$, and to confirm what the explicit formula for $Q_2(x)$ predicts---cf.\ part~(a) of Figure~\ref{figBkstar}---the values of $Q_2\bigl(\lfloor1/2+10^{k/20000}\rfloor\bigr)$ for all integer values of $k$ satisfying $20000\times 8\leq k\leq 20000\times 28$ were computed using the algorithm described in the previous paragraph.
A geometric progression, with $100$ values per decade, was then used to subdivide the interval $[10^8,10^{28}]$, and the minimum and maximum of $R_2(x)/x^{1/4}$ of the $200$ values belonging to each subinterval were computed.
Figure~\ref{f:error} presents them.
This figure suggests that, up to $10^{28}$, the quantity $\abs{R_2(x)}x^{-1/4}$ stays below~$1.2$; indeed, it appears that it rarely exceeds~$1$.
However, it must be emphasized that the data used to make this figure is sparse, and so the actual lower and upper bounds in each interval will almost certainly differ.
(One may compare this plot with the plot of actual extremal values of $R_2(x)/x^{1/4}$ for $x\leq10^{18}$ in Figure~\ref{f:true_error}.)

\begin{figure}[tbp]
  \includegraphics{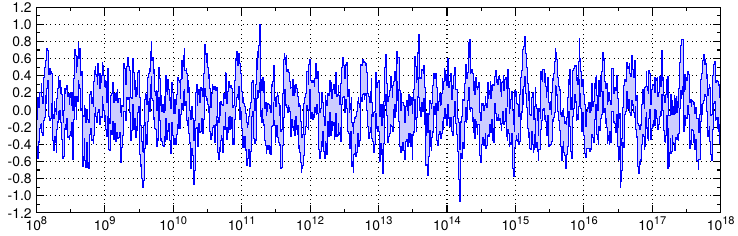}
  \includegraphics{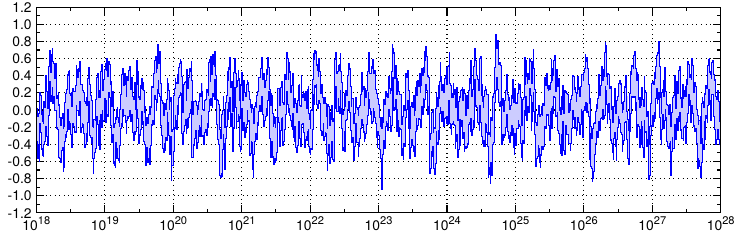}
  \caption{Estimate, based on real but sparse data, of the extremal values of $R_2(x)/x^{1/4}$ over short intervals.}
  \label{f:error}
\end{figure}

On the subintervals mentioned above with upper limits less than or equal to $10^{18}$, these estimates of the minima and maxima were used to speed up the determination of the true minima and maxima of $R_2(x)/x^{1/4}$.
This was done in the following way.
Using a segmented sieve that sieves with the squares of primes,\footnote{To speed things up, the squares of the primes used in the sieve can be subdivided into two classes: those that must have at least one multiple in each segment of the sieve, and those that may have at most one multiple in each segment of the sieve. Those in the first class can be dealt with as in the Eratosthenes sieve. Those in the second class can be dealt with by placing them, and their first multiple not yet taken into consideration, in a priority queue (min-heap); in this way, the multiples of the squares of the primes can be brought into play at the appropriate time in a sufficiently efficient way.} all square-free integers in each interval were identified.
Each segment of the square-free sieve \upd{was} analyzed $64$ integers at a time.
If it was determined that for all possible ways $Q_2(x)$ could increase\footnote{The worst cases are: for the maximum, in an interval of $63$ consecutive integers there can exist $43$ square-free integers, and for the minimum, in an interval of $64$ consecutive integers there can exist no square-free integers.} it was not possible to reach a new minimum or maximum of $R_2(x)/x^{1/4}$, then a fast path in the code was taken; otherwise, all points of increase of $Q_2(x)$ for these $64$ consecutive integers were examined one at a time.
In the fast path, a population count instruction (sideways addition) was used to update the running values of $Q_2(x)$ and a rational approximation to $R_2(x)$.

To avoid a loss of precision when two nearby, large, finite precision real numbers are subtracted, instead of keeping track of the value of $R_2(x)$, our code uses the rational approximation $U_2/V_2$ to $1/\zeta(2)$, with $U_2=4391\,12660\,01254$ and $V_2=7223\,11373\,63897$, and keeps track of the $64$-bit signed integer $V_2Q_2(x)-U_2x$.
For the rational approximation to $1/\zeta(2)$ chosen, for $x\leq 10^{18}$ there is no danger of arithmetic overflow in the value of $V_2Q_2(x)-U_2x$ as long as $\abs{R_2(x)/x^{1/4}}<4$, which is a very pessimistic bound.
Furthermore, up to $10^{18}$, the error of approximating $R_2(x)/x^{1/4}$ by $(V_2Q_2(x)-U_2x)/(V_2x^{1/4})$ is at most $9.7\times 10^{-16}$, and so the computation of $R_2(x)/x^{1/4}$ can be done using standard double-precision floating arithmetic.

To avoid the very small danger of missing a true extremum due to roundoff errors, for each interval we also recorded the $R_2(x)/x^{1/4}$ values that were within $10^{-6}$ of an extremum.
After finishing processing an interval these ``close call'' values (in most cases there were none) were confirmed not to be extremal values.

When the fast path was taken, which was most of the time due to good initial estimates of the minimum and maximum of $R_2(x)/x^{1/4}$ in an interval, processing $64$ consecutive integers required about $48$ clock cycles.
This includes sieving, updating $V_2Q_2(x)-U_2x$, and collecting statistics about gaps between consecutive square-free integers (described below in Section~\ref{fpga}).

\begin{proof}[Proof of Theorem~\ref{statue}.]\label{jim}
  Using the segmented square-free sieve described above, we computed $Q_2(x)$ for all positive integers $k\leq10^{18}$.
  For $k\leq x<k+1$, $Q_2(x)$ is constant and positive.
  In these conditions the first derivative of $R_2(x)/x^{1/4}$ cannot vanish.
  It is therefore enough to examine $R_2(x)/x^{1/4}$ at every point of increase of $Q_2(k)$.
  For minima, the value of $Q_2(x)$ before the point of increase is the one that needs to be taken into consideration.
  For maxima, the value of $Q_2(x)$ after the point of increase is the one that needs to be taken into consideration.

  Up to $10^{18}$ it was found that $R_2(x)/x^{1/4}>-1.12543$ and that $R_2(x)/x^{1/4}<1.11653$.
  The minimum occurred at $x=15495\,33137\,38409-\epsilon$, for which $Q_2(x)=9420\,03189\,39699$ and $R_2(x)/x^{1/4}\approx -1.12542\,91388$.
  The maximum occurred at $x=43$, for which $Q_2(x)=29$ and $R_2(x)/x^{1/4}\approx 1.11652\,25284$.

  Figure~\ref{f:true_error} depicts the actual minima and maxima in all subintervals of $[10^8,10^{18}]$ into which the computation was split.
  The interval $(0,10^8]$ was dealt with separately.
  Table~\ref{t:bananas} presents all relevant data for interesting values of~$x$: minimum smaller than $-1$ or maximum larger than $1$ in a subinterval.

  The computation, which also included the collection of statistics about gaps between consecutive square-free integers, described in Section~\ref{fpga} below, required approximately $6$ core-years.
  Some steps were taken to ensure the correctness of the computation.
  No random errors, due to sporadic hardware faults, were found.
  The computation was double-checked up to \upd{$10^{18}$} using different computers.
\end{proof}

\begin{figure}[tbp]
  \includegraphics{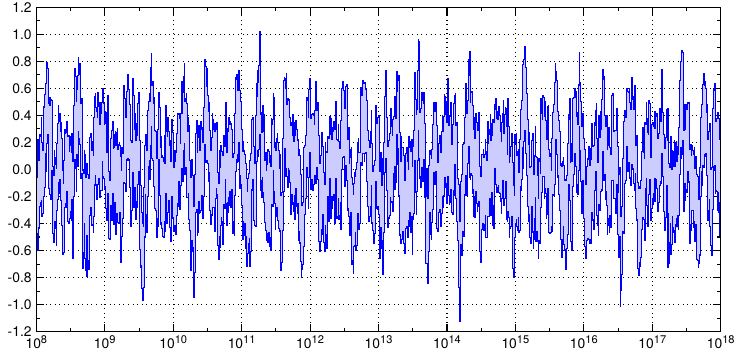}
  \caption{Actual extremal values of $R_2(x)/x^{1/4}$ over short intervals.}
  \label{f:true_error}
\end{figure}

\begin{table}[tbp]
  \centering
  \caption{Some large values of $R_2(x)/{x^{1/4}}$.}
  \label{t:bananas}
  \small
  \begin{tabular}{rrr}\hline
    \noalign{\vspace*{1pt}}
                          $x$ &                  $Q_2(x)$ & $R_2(x-\epsilon)/x^{1/4}$ \\\hline
        $15495\,33137\,38409$ &      $9420\,03189\,39699$ &         $-1.12542\,91388$ \\ 
        $15468\,73074\,99442$ &      $9403\,86065\,38027$ &         $-1.05267\,11482$ \\
    $34\,63868\,12462\,78357$ & $21\,05779\,31020\,81607$ &         $-1.01292\,95585$ \\\hline\hline
    \noalign{\vspace*{1pt}}
                          $x$ &                  $Q_2(x)$ &   $R_2(\epsilon)/x^{1/4}$ \\\hline
                         $43$ &                      $29$ &          $1.11652\,25284$ \\ 
           $18\,31416\,84519$ &        $11\,13367\,94166$ &          $1.02066\,79881$ \\
  \end{tabular}
\end{table}

\subsection{Computations on gaps between square-free numbers}\label{fpga}

If $n$ is a square-free integer and the smallest square-free integer larger than $n$ is $n+g$, then we say $g$ is the \textit{gap} between them.
Because it took little extra time during the computation of $Q_2(x)$ up to $10^{18}$, we also recorded the number occurrences of gaps of different sizes between square-free numbers.
By using one bit per integer to record whether it is square-free, the number of occurrences of a gap size of $1$ within an interval of $64$ consecutive integers can be determined with a logical shift operation, followed by a bitwise \textit{and}, then a population count.
Counting the number of occurrences of larger gaps can be done in essentially the same way, though one must ensure that only gaps between consecutive square-free numbers are counted; this can be done using only two 64-bit registers and elementary bitwise logical operations.
The gap between the last square-free integer in each interval of $64$ integers and the first square-free integer of the next interval has to be handled separately---on contemporary Intel and AMD processors this can be done efficiently by using the bit scan forward and bit scan reverse instructions.

Table~\ref{t:sprained_ankle} reports the number of occurrences for each gap that were observed up to~$10^{18}$.
The location of the first occurrence of each gap was also recorded.
These first occurrences match exactly the known entries of sequence A020754 of the Online Encyclopedia of Integer Sequences.

\begin{table}[tbp]
  \centering
  \caption{Number of occurrences of gaps between square-free numbers.}
  \label{t:sprained_ankle}
  \small
  \begin{tabular}{rrr}
      Gap & Occurrences to $10^{18}$ &          First occurrence \\\hline
      $1$ &  $322\,63409\,89393\,18708$ &                        $1$\\
      $2$ &  $197\,14711\,80333\,78426$ &                        $3$\\ 
      $3$ &   $71\,66013\,71274\,85509$ &                        $7$\\
      $4$ &   $14\,97881\,75413\,00269$ &                       $47$\\
      $5$ &       $93853\,53417\,69417$ &                      $241$\\
      $6$ &       $50066\,36277\,48006$ &                      $843$\\
      $7$ &        $6279\,19138\,50368$ &                    $22019$\\
      $8$ &         $474\,54653\,54639$ &                 $2\,17069$\\
      $9$ &          $12\,56986\,93049$ &                $10\,92746$\\
     $10$ &           $6\,40076\,45662$ &                $88\,70023$\\
     $11$ &              $34483\,07230$ &              $2623\,15466$\\
     $12$ &               $6796\,23488$ &              $2211\,67421$\\
     $13$ &                $146\,82596$ &          $4\,72556\,89914$\\
     $14$ &                $142\,87958$ &          $8\,24625\,76219$\\
     $15$ &                  $5\,57865$ &        $104\,34605\,53363$\\
     $16$ &                     $19206$ &       $7918\,07700\,78547$\\
     $17$ &                       $218$ &   $3\,21522\,63351\,43217$\\
     $18$ &                       $124$ &  $23\,74245\,36409\,00971$\\
     $19$ &                        $11$ & $125\,78100\,08340\,58567$\\
  \end{tabular}
\end{table}

Let $\boldsymbol{h}=\{h_1,\ldots,h_l\}$ be a set of $l$ distinct positive integers. It is known (see, e.g., \cite{Hall,Mirsky}) that
\[
  \sum_{n\leq x} \bigl| \mu(n+h_1)\mu(n+h_2)\cdots\mu(n+h_l) \bigr| =
    A_2(\boldsymbol{h}) x + O(x^{l/(l+1)+\epsilon}),
\]
where
\[
  A_2(\boldsymbol{h}) = \prod_{p}\left( 1-\frac{\nu(p)}{p^2} \right).
\]
Here, $p$ is a prime and $\nu(p)$ denotes the number of distinct residue classes modulo $p^2$ occupied by the offsets $h_i$.
From this result it is possible to compute, using the inclusion-exclusion principle, the theoretical density of a specific gap between consecutive square-free numbers.
In particular, a gap $g$ occurs with density
\[
  D_2(g) = \sum_{\boldsymbol{h}} (-1)^{\abs{\boldsymbol{h}}} A_2(\boldsymbol{h}),
\]
where the sum is over all subsets $\boldsymbol{h}$ of ${ 0,1,\ldots,g }$ that contain both $0$ and $g$.
The quantity $D_2(g)$ was computed by using this method for $1\leq g\leq 56$.
For this range of values of $g$ it suffices to compute, for each subset $\boldsymbol{h}$, the quantity
\[
  C(\boldsymbol{h})=\prod_{2\leq p \leq 7}\bigl(p^2-\nu(p)\bigr),
\]
from which
\[
  A_2(\boldsymbol{h}) = \frac{C(\boldsymbol{h})}{2^2\cdot 3^3\cdot 5^2\cdot 7^2} \prod_{p>7}\left( 1-\frac{\abs{\boldsymbol{h}}}{p^2} \right).
\]
The Euler \upd{products $\prod_{p>7}( 1-k/p^2 )$ were computed for $2\leq k \leq g$} using the \texttt{prodeulerrat} function of the PARI/GP calculator~\cite{pari}.
The computation \upd{of $\sum_{|\boldsymbol{h}|=k} C(\boldsymbol{h})$} was performed and double-checked on four DE2-115 FPGA kits, using a custom-designed digital circuit; \upd{all possible values of $k$ were handled in a single run}.
Each DE2-115 kit, working at $125$ MHz, was about three times faster than an Intel i5-8400T processor (using all six cores turbo-boosted to $3$ GHz).
The computation of $D_2(56)$ took $18$ days.

Table~\ref{t:ice} presents the computed values of $D_2(g)$.
As a check of the correctness of the computations, $\sum_{g=1}^{56} D_2(g)$ was computed using 200 decimal digits and compared to $1/\zeta(2)$; the difference, $1.88013\ldots\times 10^{-72}$ was, as expected, smaller than $D_2(56)$; \upd{$\sum_{g=1}^{56} gD_2(g)$ was also computed and was as close to $1$ as was to be expected (error close to $10^{-70}$).}
The empirical data of Table~\ref{t:sprained_ankle} is in excellent agreement with the theoretical density data.

As depicted in~Figure~\ref{f:unexpected}, we find that $-0.72 g\log g$ is a good approximation to $\log D_2(g)$.
The constant $-0.72$ obtained here differs somewhat from the asymptotic value of $-\frac{6}{\pi^2}(1+o(1))\approx -0.6079(1+o(1))$ obtained in 1997 by Grimmett~\cite[Thm.~1]{Grimmett}.
This may well be due to slow convergence coming from the approximation $p_{k} \sim k \log k$, where $p_{k}$ denotes the $k$th prime, as used in that article.
The next term in this asymptotic expansion is $k\log\log k$, which is not insignificant even for moderate values of $k$, so it is possible that a more refined analysis would better match our results for these small $k$.
We leave this for future research.

\begin{table}[tbp]
  \centering
  \caption{Theoretical densities of gaps between consecutive square-free numbers.}
  \label{t:ice}
  \small
  \begin{tabular}{rlcrl}
     $g$ &                               $D_2(g)$ & $\quad$ &  $g$ &                               $D_2(g)$ \\\hline
    \noalign{\vspace*{2pt}}
     $1$ & $0.32263\,40989\,39245$                &         & $29$ & $0.63177\,19305\,86943\times 10^{-31}$ \\
     $2$ & $0.19714\,71180\,33435$                &         & $30$ & $0.10282\,90469\,64090\times 10^{-30}$ \\
     $3$ & $0.71660\,13712\,76261\times 10^{-1}$  &         & $31$ & $0.17403\,94799\,96015\times 10^{-32}$ \\
     $4$ & $0.14978\,81754\,10999\times 10^{-1}$  &         & $32$ & $0.28084\,80910\,76961\times 10^{-34}$ \\
     $5$ & $0.93853\,53418\,63043\times 10^{-3}$  &         & $33$ & $0.15342\,95147\,58483\times 10^{-36}$ \\
     $6$ & $0.50066\,36278\,21044\times 10^{-3}$  &         & $34$ & $0.11415\,44222\,39253\times 10^{-36}$ \\
     $7$ & $0.62791\,91377\,90450\times 10^{-4}$  &         & $35$ & $0.14860\,11062\,51056\times 10^{-38}$ \\
     $8$ & $0.47454\,65309\,60387\times 10^{-5}$  &         & $36$ & $0.16033\,84614\,66238\times 10^{-40}$ \\
     $9$ & $0.12569\,86997\,40059\times 10^{-6}$  &         & $37$ & $0.50274\,94173\,85908\times 10^{-43}$ \\
    $10$ & $0.64007\,67602\,84955\times 10^{-7}$  &         & $38$ & $0.35373\,30570\,36488\times 10^{-43}$ \\
    $11$ & $0.34482\,93004\,63568\times 10^{-8}$  &         & $39$ & $0.77018\,72886\,84674\times 10^{-44}$ \\
    $12$ & $0.67963\,40254\,86347\times 10^{-9}$  &         & $40$ & $0.12458\,52832\,17433\times 10^{-45}$ \\
    $13$ & $0.14682\,15205\,30483\times 10^{-10}$ &         & $41$ & $0.59476\,17045\,00224\times 10^{-48}$ \\
    $14$ & $0.14286\,40378\,25231\times 10^{-10}$ &         & $42$ & $0.35813\,75704\,70546\times 10^{-48}$ \\
    $15$ & $0.55714\,29760\,16079\times 10^{-12}$ &         & $43$ & $0.40400\,17117\,92499\times 10^{-50}$ \\
    $16$ & $0.19050\,90268\,96753\times 10^{-13}$ &         & $44$ & $0.34084\,06610\,31179\times 10^{-52}$ \\
    $17$ & $0.21748\,39446\,65984\times 10^{-15}$ &         & $45$ & $0.10817\,55378\,21063\times 10^{-54}$ \\
    $18$ & $0.14630\,61929\,51793\times 10^{-15}$ &         & $46$ & $0.63385\,01157\,66665\times 10^{-55}$ \\
    $19$ & $0.39151\,44176\,57893\times 10^{-17}$ &         & $47$ & $0.45781\,30010\,73428\times 10^{-57}$ \\
    $20$ & $0.77538\,08469\,50770\times 10^{-19}$ &         & $48$ & $0.80124\,38339\,00287\times 10^{-58}$ \\
    $21$ & $0.65658\,47892\,78060\times 10^{-21}$ &         & $49$ & $0.28022\,35279\,83020\times 10^{-60}$ \\
    $22$ & $0.61904\,16343\,32994\times 10^{-20}$ &         & $50$ & $0.31333\,60110\,94648\times 10^{-60}$ \\
    $23$ & $0.13737\,01700\,42589\times 10^{-21}$ &         & $51$ & $0.20818\,39530\,55303\times 10^{-62}$ \\
    $24$ & $0.20665\,64577\,26736\times 10^{-23}$ &         & $52$ & $0.45317\,28632\,34738\times 10^{-64}$ \\
    $25$ & $0.12683\,74486\,64448\times 10^{-25}$ &         & $53$ & $0.14537\,32795\,43612\times 10^{-66}$ \\
    $26$ & $0.12367\,12494\,95675\times 10^{-25}$ &         & $54$ & $0.60999\,14000\,07234\times 10^{-66}$ \\
    $27$ & $0.20208\,56647\,02339\times 10^{-27}$ &         & $55$ & $0.38663\,66899\,12914\times 10^{-68}$ \\
    $28$ & $0.87773\,78539\,69717\times 10^{-29}$ &         & $56$ & $0.61221\,28936\,22138\times 10^{-70}$ \\
  \end{tabular}
\end{table}

\begin{figure}[tbp]
  \includegraphics{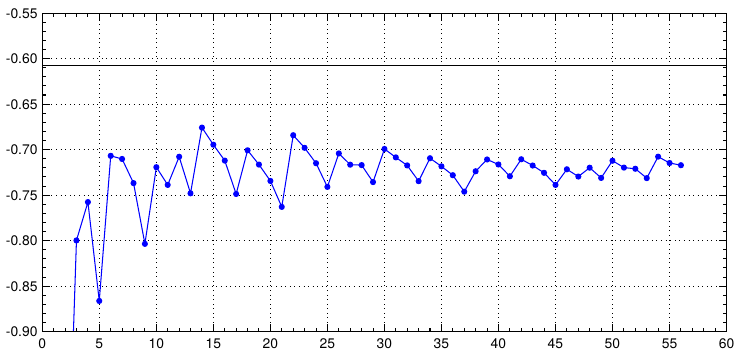}
  \caption{$y=\log D_2(g)/(g\log g)$ for $g\leq 56$, together with the asymptotic value $y=-6/\pi^2$.}
  \label{f:unexpected}
\end{figure}

\updr{%
Let $s_1, s_2, \ldots$ denote the sequence of square-free positive integers.
In 1951, Erd\H{o}s \cite{Erdos51} raised a question concerning moments of the gaps in this sequence, asking for which values of $\gamma\geq0$ does there exist a constant $\beta_2(\gamma)$ for which
\[
S_\gamma(x) := \sum_{s_{n+1}\leq x} (s_{n+1}-s_n)^\gamma \sim \beta_2(\gamma)x.
\]
It is conjectured that such a constant exists for all nonnegative~$\gamma$.
Erd\H{o}s in fact showed this for $\gamma\leq2$, and this has been improved by a number of researchers since then (see \cite[\S6.2]{FGT} for a survey).
Currently the best known result is that such a constant exists for $\gamma \leq 59/16 = 3.6875$, which was proved by Huxley in 2000~\cite{Huxley2000}.
}

\updr{%
If $\beta_2(\gamma)$ exists it must be equal to $\sum_{g=1}^{\infty} g^{\gamma} D_2(g)$.
Thus, the existence of $\beta_2(\gamma)$ is intimately tied to the rate of decay of~$D_2(g)$.
Our $D_2(g)$ data strongly suggests that indeed $\beta_2(\gamma)$ exists for all $\gamma>0$.
}

\updr{%
We may use our data on gaps between square-free numbers to obtain empirical estimates for the values $\beta_2(\gamma)$, for several values of $\gamma$.
We computed $S_\gamma(x)/x$ for $\gamma=2$, $3$, \ldots, $10$ for a number of values of $x\leq 10^{18}$.
In each case, the numerical values obtained appear to converge reasonably quickly.
Figure~\ref{figSFM2} displays the scaled deviations from the limit we calculated for $\gamma=2$ using the values $x=10^{k/100}$ for $800\leq k \leq 1800$.
Table~\ref{tblSFM} exhibits some selected numerical values of $S_\gamma(x)$ for $\gamma\leq7$ and $x\leq10^{18}$.
Additional values for $\gamma\leq10$ and $x=10^k$ with $6\leq k \leq18$ are available in the electronic supplement (see Section~\ref{secAppendix}).
}

\begin{figure}[tbp]
  \includegraphics{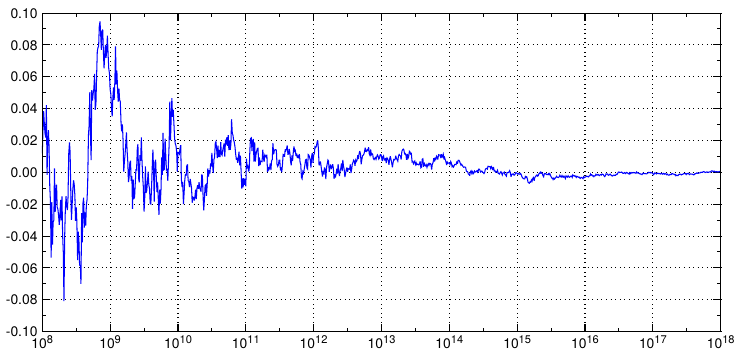}
  \caption{\updr{$\bigl(S_2(x)/x-2.040709776467140\bigr)\sqrt{x}$ for selected values} \updr{of $x$ belonging to the interval $[10^8,10^{18}]$.}}
  \label{figSFM2}
\end{figure}

\begin{table}[tbp]
\tiny
\caption{\updr{Estimates for $\beta_2(\gamma)$ for $\gamma\in\{2,3,4,5,6,7\}$ via computation of $S_\gamma(x)/x$ for several $x$. The last column displays conjectured} \updr{limit values based on the $D_2(g)$ data.}}\label{tblSFM}
\begin{tabular}{|c|rrrrr|r|}\hline
\TS\BS
$\gamma$ & $x=10^6$ & $x=10^9$ & $x=10^{12}$ & $x=10^{15}$ & $x=10^{18}$ & $x=\infty$ \\\hline
\TS
2 &   2.040710 &   2.040711425 &   2.0407097872 &   2.0407097764 &   2.0407097765 &   2.0407097765 \\
3 &   5.042964 &   5.042888653 &   5.0428682597 &   5.0428681130 &   5.0428681138 &   5.0428681138 \\
4 &  14.523182 &  14.523406337 &  14.5232136178 &  14.5232121936 &  14.5232122095 &  14.5232122095 \\
5 &  47.421636 &  47.436267157 &  47.4345636586 &  47.4345517194 &  47.4345519699 &  47.4345519700 \\
6 & 173.110430 & 173.342584985 & 173.3277205474 & 173.3276332827 & 173.3276367967 & 173.3276367966 \\
7 & 701.551764 & 704.297622493 & 704.1675915801 & 704.1671027426 & 704.1671492441 & 704.1671492022 \\\hline
\end{tabular}
\end{table}

\section{Computations on cube-free numbers}\label{secCubefree}

The cube-free case was handled in almost the same way as the square-free case.
Below, we describe only the most relevant differences between the two cases.

The function $Q_3(x)$ can be computed using the formula
\begin{equation*}
  Q_3(x) = \sum_{a=1}^{\lfloor\sqrt[3] x\rfloor} \mu(a)\biggl\lfloor\frac{x}{a^3}\biggr\rfloor.
\end{equation*}
Because evaluating it was fast enough for our purposes, this formula was used to build a table of values of $Q_3(x)$ for $x$ values in a geometric progression up to $10^{28}$, just as was done for the square-free case.
Up to $10^{18}$, these values were used to compute good estimates of the minimum and maximum of $R_3(x)/x^{1/6}$ for each subinterval into which the computation was subdivided.

The constant $1/\zeta(3)$ was approximated by $U_3/V_3$, with $U_3=9372\,16458\,26352$ and $V_3=11265\,87513\,41045$.
The fast path case of the code was taken when it was determined that, based on the current value of $V_3Q_3(x)-U_3x$, no new extrema could occur.
To assess this, the worst cases are: for the maximum, in an interval of $47$ consecutive integers there can exist $42$ cube-free integers, and for the minimum, in an interval of $64$ consecutive integers there can exist no cube-free integers.

\begin{proof}[Proof of Theorem~\ref{statue2}.]
  Using a cube-free segmented sieve, we computed $Q_3(x)$ for all positive integers $k\leq10^{18}$.
  It was found that $R_3(x)/x^{1/6}>-1.13952$ and that $R_3(x)/x^{1/6}<1.27417$.
  The minimum occurred at $x=37\,25506\,68027\,64753-\epsilon$, for which $Q_3(x)=30\,99276\,47392\,06106$ and $R_3(x)/x^{1/6}\approx -1.13951\,13865$.
  The maximum occurred at $x=239$, for which $Q_3(x)=202$ and $R_3(x)/x^{1/6}\approx 1.27416\,63981$.

  Figure~\ref{f:true_cube_error} depicts the actual minimum and maximum in each subinterval of $[10^8,10^{18}]$ into which the computation was split.
  The interval $(0,10^8]$ was dealt with separately.
  Table~\ref{t:oranges} presents all relevant data for interesting values of~$x$: those with minimum smaller than $-1$ or maximum larger than $1$ in a subinterval.

  This computation, which also included the collection of statistics about gaps between consecutive cube-free integers, described below in Section~\ref{last}, required about $4.2$ core-years.
  Some steps were taken to ensure the correctness of the computation.
  No random errors, due to sporadic hardware faults, were found.
  The computation was double-checked up to \upd{$10^{17}$} using different computers.
\end{proof}

\begin{figure}[tbp]
  \includegraphics{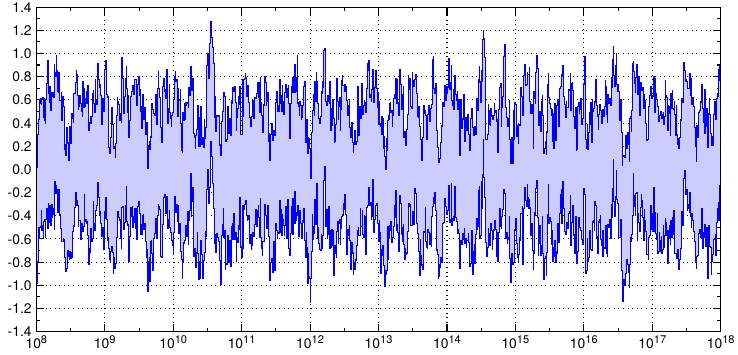}
  \caption{Actual extremal values of $R_3(x)/x^{1/6}$ over short intervals.}
  \label{f:true_cube_error}
\end{figure}

\begin{table}[tbp]
  \centering
  \caption{Some large values of $R_3(x)/{x^{1/6}}$.}
  \label{t:oranges}
  \small
  \begin{tabular}{rrr}\hline
    \noalign{\vspace*{1pt}}
                          $x$ &                  $Q_3(x)$ & $R_3(x-\epsilon)/x^{1/6}$ \\\hline
    $37\,25506\,68027\,64753$ & $30\,99276\,47392\,06106$ & $-1.13951\,13865$ \\
          $100\,67232\,71273$ &        $83\,75005\,11408$ & $-1.13449\,47648$ \\
                 $247\,61514$ &              $205\,99269$ & $-1.05742\,56106$ \\
    $36\,88026\,73969\,44881$ & $30\,68096\,63502\,65741$ & $-1.04604\,23031$ \\
               $43307\,31505$ &            $36027\,67427$ & $-1.03229\,38850$ \\
    $46\,61835\,65920\,74881$ & $38\,78215\,45465\,42909$ & $-1.01702\,12628$ \\
         $1230\,68914\,90897$ &      $1023\,81937\,64677$ & $-1.00027\,80893$ \\\hline\hline
    \noalign{\vspace*{1pt}}
                          $x$ &                  $Q_3(x)$ & $R_3(\epsilon)/x^{1/6}$ \\\hline
                        $239$ &                     $202$ & $1.27416\,63981$ \\
            $3\,58443\,58111$ &         $\,298191\,85851$ & $1.27194\,09798$ \\
            $3\,53865\,20351$ &         $\,294383\,07239$ & $1.20468\,12652$ \\
        $34590\,90473\,40623$ &     $2877\,642867\,25350$ & $1.19557\,11640$ \\
            $3\,63353\,32103$ &         $\,302276\,30727$ & $1.13543\,70876$ \\
        $34787\,05944\,82109$ &     $2893\,961122\,54000$ & $1.13103\,74620$ \\
        $33814\,05939\,98087$ &     $2813\,016531\,16125$ & $1.12409\,97343$ \\
            $3\,41744\,07247$ &         $\,284299\,41404$ & $1.08213\,64423$ \\
        $69306\,16294\,63231$ &     $5765\,630792\,03580$ & $1.07456\,15853$ \\
            $3\,74709\,97111$ &         $\,311723\,98816$ & $1.06156\,70536$ \\
    $26\,92790\,56984\,79471$ & $2\,24015\,232787\,23696$ & $1.05744\,66414$ \\
            $3\,88688\,68031$ &         $\,323352\,97940$ & $1.05056\,75620$ \\
          $164\,33636\,91245$ &       $13\,671263\,70691$ & $1.03873\,29148$ \\
          $160\,58877\,68357$ &       $13\,359498\,74144$ & $1.02234\,21754$ \\
    $29\,93414\,17864\,74199$ & $2\,49024\,332440\,44674$ & $1.00186\,40000$ \\
            $\,317521\,82599$ &         $\,264148\,74856$ & $1.00161\,05574$ \\
  \end{tabular}
\end{table}

\subsection{Computations on gaps between cube-free numbers}\label{last}

Again, the cube-free case was handled in almost the same way as the square-free case.
Table~\ref{t:full_recovery} reports the number of occurrences of each gap that were observed up to~$10^{18}$.
The location of the first occurrence of each gap is also recorded.

\begin{table}[tbp]
  \centering
  \caption{Number of occurrences of gaps between cube-free numbers.}
  \label{t:full_recovery}
  \small
  \begin{tabular}{rrr}
      Gap & Occurrences to $10^{18}$ &         First occurrence \\\hline
      $1$ &  $676\,89273\,70098\,81941$ &                      $1$ \\
      $2$ &  $142\,32586\,49247\,76532$ &                      $7$ \\ 
      $3$ &   $12\,30462\,64590\,31858$ &                     $79$ \\
      $4$ &       $37909\,85862\,31067$ &                   $1374$ \\
      $5$ &         $501\,42960\,01738$ &                  $22623$ \\
      $6$ &           $3\,11951\,93429$ &             $180\,35621$ \\
      $7$ &               $1093\,54086$ &           $43797\,76619$ \\
      $8$ &                  $2\,36827$ &      $120\,42443\,28623$ \\
      $9$ &                       $252$ & $6\,16327\,84732\,58246$ \\
     $10$ &                        $40$ & $2\,60463\,90911\,38247$ \\
  \end{tabular}
\end{table}

In the cube-free case, $A_3(\boldsymbol{h})$ is computed by using
\[
  A_3(\boldsymbol{h}) = \prod_{p}\left( 1-\frac{\nu(p)}{p^3} \right),
\]
where $p$ is a prime and $\nu(p)$ denotes the number of distinct residue classes modulo $p^3$ occupied by the offsets $h_i$.
A gap of size $g$ occurs with density
\[
  D_3(g) = \sum_{\boldsymbol{h}} (-1)^{\abs{\boldsymbol{h}}} A_3(\boldsymbol{h}),
\]
where the sum is over all subsets $\boldsymbol{h}$ of ${ 0,1,\ldots,g }$ that contain both $0$ and $g$.
Table~\ref{t:icecube} presents the \upd{$50$} values of $D_3(g)$ that were computed using this method.
As a check of the correctness of the computations, $\sum_{g=1}^{\upd{50}} D_3(g)$ was computed using 200 decimal digits and compared to $1/\zeta(3)$; \upd{the difference, $4.01206\ldots\times 10^{-167}$ was, as expected, smaller than $D_3(50)$; $\sum_{g=1}^{50}gD_3(g)$ was also computed and was as close to $1$ as was to be expected.}
The empirical data of Table~\ref{t:full_recovery} is in excellent agreement with the theoretical density data.

Over the range $1\leq g\leq 50$, we find that the quantity $\log D_3(g)$ is well approximated by $-1.9g\log g$.
We remark that Grimmett's method \cite{Grimmett} produces the asymptotic formula $\log D_3(g)/(g\log g) = -\frac{2}{\zeta(3)}(1+o(1))$, and $-2/\zeta(3)=-1.6638\ldots$\,.

\begin{table}[tbp]
  \centering
  \caption{Theoretical densities of gaps between consecutive cube-free numbers.}
  \label{t:icecube}
  \small
  \begin{tabular}{rlcrl}
     $g$ &                                $D_3(g)$ & $\quad$ &  $g$ &                                $D_3(g)$ \\\hline
    \noalign{\vspace*{2pt}}
     $1$ & $0.67689\,27370\,09882$                 &         & $26$ & $0.44546\,29821\,75573\times 10^{-68}$  \\
     $2$ & $0.14232\,58649\,24778$                 &         & $27$ & $0.50228\,49540\,01590\times 10^{-72}$  \\
     $3$ & $0.12304\,62645\,90258\times 10^{-1}$   &         & $28$ & $0.35517\,77059\,18650\times 10^{-76}$  \\
     $4$ & $0.37909\,85862\,37504\times 10^{-3}$   &         & $29$ & $0.10948\,56839\,98440\times 10^{-77}$  \\
     $5$ & $0.50142\,95997\,99602\times 10^{-5}$   &         & $30$ & $0.14034\,41927\,96649\times 10^{-81}$  \\
     $6$ & $0.31195\,19555\,11768\times 10^{-7}$   &         & $31$ & $0.10775\,11826\,74150\times 10^{-85}$  \\
     $7$ & $0.10935\,35487\,91799\times 10^{-9}$   &         & $32$ & $0.62150\,75030\,02521\times 10^{-90}$  \\
     $8$ & $0.23670\,56572\,47114\times 10^{-12}$  &         & $33$ & $0.21409\,01531\,52773\times 10^{-94}$  \\
     $9$ & $0.25275\,98567\,71328\times 10^{-15}$  &         & $34$ & $0.38930\,81563\,86705\times 10^{-95}$  \\
    $10$ & $0.42310\,14755\,45621\times 10^{-16}$  &         & $35$ & $0.27672\,90441\,31731\times 10^{-99}$  \\
    $11$ & $0.73494\,59664\,53053\times 10^{-19}$  &         & $36$ & $0.12904\,21686\,35561\times 10^{-103}$ \\
    $12$ & $0.69552\,55994\,15827\times 10^{-22}$  &         & $37$ & $0.44131\,28050\,38986\times 10^{-108}$ \\
    $13$ & $0.43845\,57749\,11792\times 10^{-25}$  &         & $38$ & $0.13260\,60421\,57053\times 10^{-112}$ \\
    $14$ & $0.19972\,82502\,29783\times 10^{-28}$  &         & $39$ & $0.35266\,58540\,53463\times 10^{-117}$ \\
    $15$ & $0.68946\,55450\,60973\times 10^{-32}$  &         & $40$ & $0.83730\,24773\,01521\times 10^{-122}$ \\
    $16$ & $0.18634\,08131\,75014\times 10^{-35}$  &         & \upd{$41$} & \upd{$0.13300\,34889\,45102\times 10^{-126}$} \\
    $17$ & $0.30297\,77572\,65416\times 10^{-39}$  &         & \upd{$42$} & \upd{$0.22933\,45743\,54812\times 10^{-127}$} \\
    $18$ & $0.50536\,07191\,59986\times 10^{-40}$  &         & \upd{$43$} & \upd{$0.79323\,49842\,56904\times 10^{-132}$} \\
    $19$ & $0.15915\,83103\,73791\times 10^{-43}$  &         & \upd{$44$} & \upd{$0.18938\,93144\,19008\times 10^{-136}$} \\
    $20$ & $0.31621\,05214\,23280\times 10^{-47}$  &         & \upd{$45$} & \upd{$0.35690\,93962\,66399\times 10^{-141}$} \\
    $21$ & $0.47462\,27415\,50652\times 10^{-51}$  &         & \upd{$46$} & \upd{$0.59564\,59380\,20661\times 10^{-146}$} \\
    $22$ & $0.57296\,97142\,63535\times 10^{-55}$  &         & \upd{$47$} & \upd{$0.89478\,82503\,39622\times 10^{-151}$} \\
    $23$ & $0.57443\,31927\,85201\times 10^{-59}$  &         & \upd{$48$} & \upd{$0.12228\,99935\,26112\times 10^{-155}$} \\
    $24$ & $0.48797\,53721\,81741\times 10^{-63}$  &         & \upd{$49$} & \upd{$0.11430\,40812\,71746\times 10^{-160}$} \\
    $25$ & $0.26720\,24542\,66525\times 10^{-67}$  &         & \upd{$50$} & \upd{$0.19456\,96735\,87292\times 10^{-161}$} \\
  \end{tabular}
\end{table}

\updr{%
We use these results to obtain empirical information regarding the moments of gaps between cube-free integers, just as we did for the square-free case.
Let $c_1, c_2, \ldots$ denote the sequence of cube-free positive integers.
One may ask for which $\gamma\geq0$ does there exist a constant $\beta_3(\gamma)$ for which
\[
C_\gamma(x) := \sum_{c_{n+1}\leq x} (c_{n+1}-c_n)^\gamma \sim \beta_3(\gamma)x.
\]
The best known result here was established by Huxley in 1996 \cite{Huxley96}, who proved that such a constant exists for $\gamma<11/2$ (and more generally, in the $k$-free case, for $\gamma<2k-1+\frac{2}{k+1}$).
}

\updr{%
Table~\ref{tblCFM} exhibits some selected numerical estimates for $\beta_3(\gamma)$ obtained by computing $C_\gamma(x)/x$ for $\gamma\in\{2,\ldots,7\}$, for a number of values of $x$.
The numerical values appear to converge reasonably well here.
Additional values for $\gamma\leq10$ are available in the electronic supplement.
}

\begin{table}[tbp]
\caption{\updr{Estimates for $\beta_3(\gamma)$ for $\gamma\in\{2,3,4,5,6,7\}$ via computation of $C_\gamma(x)/x$ for several $x$. The last column displays conjectured} \updr{limit values based on the $D_3(g)$ data.}}\label{tblCFM}
\tiny
\begin{tabular}{|c|rrrrr|r|}\hline
\TS\BS
$\gamma$ & $x=10^6$ & $x=10^9$ & $x=10^{12}$ & $x=10^{15}$ & $x=10^{18}$& $x=\infty$ \\\hline
\TS
2 &  1.363095 &  1.363129847 &  1.3631298981 &  1.3631298980 &  1.3631298980 &  1.3631298980 \\
3 &  2.172415 &  2.172620041 &  2.1726204443 &  2.1726204431 &  2.1726204431 &  2.1726204431 \\
4 &  4.049955 &  4.051002839 &  4.0510052027 &  4.0510051847 &  4.0510051846 &  4.0510051846 \\
5 &  8.620327 &  8.625443737 &  8.6254558797 &  8.6254556918 &  8.6254556910 &  8.6254556910 \\
6 & 20.363715 & 20.388367847 & 20.3884269345 & 20.3884253421 & 20.3884253357 & 20.3884253356 \\
7 & 52.297495 & 52.416263161 & 52.4165498874 & 52.4165378845 & 52.4165378390 & 52.4165378387 \\\hline
\end{tabular}
\end{table}

\section{\upd{Appendix}}\label{secAppendix}

\upd{An electronic supplement available with this article contains more extensive data, including values pertaining to Figures~\ref{f:true_error}, \ref{figSFM2}, and~\ref{f:true_cube_error}, as well as Tables~\ref{t:sprained_ankle}, \ref{t:ice}, \ref{tblSFM}, \ref{t:full_recovery}, \ref{t:icecube}, and~\ref{tblCFM}.
For example, the supplement lists estimates for $\beta_2(\gamma)$ and $\beta_3(\gamma)$ for $\gamma\in\{2,\ldots,10\}$ obtained by computing the value of $S_\gamma(x)/x$ and $C_\gamma(x)/x$ using about $1000$ values of $x$ in $[10^8,10^{18}]$, from which one can manufacture plots similar to Figure~\ref{figSFM2} for other $\beta_2(\gamma)$ and $\beta_3(\gamma)$.
Values in the supplement are often listed to higher precision, for example for Tables~\ref{t:ice} and~\ref{t:icecube}.}

\section{Acknowledgments}

The computations described in Section~\ref{secOscillations} were undertaken with the assistance of resources and services from the National Computational Infrastructure (NCI), which is supported by the Australian Government.
Almost all the computations reported in Sections~\ref{secSquarefree} and \ref{secCubefree} were performed in the computer labs of the Electronics, Telecommunication, and Informatics Department of the University of Aveiro.

\upd{We are grateful to Roger Baker, Keith Conrad, Geoffrey Grimmett, H.-Q.\ Liu, and the anonymous referee for providing very valuable feedback.}

\end{document}